\title{Compatibility of $F$-isocrystals on adjoint Shimura varieties}
\author{Jake Huryn, Kiran S. Kedlaya, Christian Klevdal, and Stefan Patrikis}
\address{Department of Mathematics, The Ohio State University, 100 Math Tower, 231 West 18th
Avenue, Columbus, OH 43210, USA}
\email{huryn.5@osu.edu}
\address{Department of Mathematics, University of California San Diego, 9500 Gilman Drive, La Jolla, CA 92093, USA}
\email{kedlaya@ucsd.edu}
\email{christian.klevdal@gmail.com}
\address{Department of Mathematics, The Ohio State University, 100 Math Tower, 231 West 18th
Avenue, Columbus, OH 43210, USA}
\email{patrikis.1@osu.edu}
\date{}
\begin{document}

\begin{abstract}
In this article, we extend the main results of \cite{patrikis-klevdal:shimura} to include compatibility of canonical $\ell$-adic local systems and canonical $F$-isocrystals on Shimura varieties in the superrigid regime. Our method relies on the crystallinity of canonical $p$-adic local systems of Esnault--Groechenig \cite{pila-shankar-tsimerman:andre-oort}, \cite{esnault-groechenig:cristallinity} as well as Margulis superrigidity and the crystalline-to-\'etale companion construction of Drinfeld \cite{drinfeld:pross}, Abe-Esnault \cite{abe-esnault:lefschetz}, and Kedlaya \cite{kedlaya:etale-crystalline-1}. 
\end{abstract}
\maketitle


\section{Introduction}
In his 1979 article in the Corvallis proceedings \cite{deligne:canonicalmodels}, Deligne constructs from a Shimura datum $(G,X)$ a tower of Shimura varieties $\{\Sh_K(G,X)\}_K$ indexed by compact open subgroups $K$ of $G(\mbb{A}_f)$.
A guiding principle in this theory is the expectation that Shimura varieties should be moduli spaces of motives with $G$-structure.\footnote{
This is not technically correct for general $G$, where Shimura varieties are expected to parametrize motives with $G^c$-structure for $G^c$ a certain central quotient of $G$.
In the body of the work, we assume $Z_G(\QQ)$ is discrete in $Z_G(\mbb{A}_f)$, which ensures that $G = G^c$, so we make this assumption throughout the introduction as well.
}
The largest class of Shimura varieties which can be related (in a precise sense) to moduli spaces of motives are the abelian type Shimura varieties; the motives in this case arise from abelian varieties with certain Hodge cycles.
This still leaves a large class of Shimura varieties, including all exceptional Shimura varieties, where there is no known moduli-theoretic interpretation.
Nevertheless, the `motivic expectation' often gives concrete predictions for important properties of Shimura varieties, which can sometimes be proven unconditionally.
One such example is the theory of canonical models (proven for abelian type Shimura varieties by Deligne \cite{deligne:canonicalmodels} and in general by Milne \cite{milne:canonical-models}), which shows that the Shimura varieties $\Sh_K(G,X)$, \emph{a priori} defined over $\CC$, are in fact naturally defined over a number field $E(G,X) \subseteq \CC$, the reflex field.
A much deeper prediction is that  $\Sh_K(G,X)$ should have an \emph{integral} canonical model, defined over the ring of integers of $E(G,X)$ (at least away from finitely many primes).
The conjecture of Langlands--Rapoport gives a precise description of the mod $p$-points of the integral canonical models of Shimura varieties. 

In a seemingly separate direction, each Shimura variety admits canonical $\ell$-adic/$p$-adic `coefficient objects' (i.e.\ $\ell$-adic local systems/$F$-isocrystals on mod-$p$ fibers) attached to each representation of $G$.
These are expected to be the $\ell$-adic \'etale and crystalline realizations of a universal family of motives, and thus one predicts that these coefficient objects form a compatible system, i.e.\ that the semisimple Frobenius conjugacy classes at closed points on an integral canonical model are rational and independent of $\ell$.
When $(G,X)$ is abelian type, this and much more is known by work of Kisin \cite{kisin:integral-models-abelian-type}, \cite{kisin:mod-p}. 

The main focus of this article is the question of compatibility of the canonical coefficient objects in the \emph{non}-abelian type setting, where the essential case to consider is when $G^\ad$ is $\QQ$-simple and has real rank $\geq 2$.
We thus make this assumption.
In a previous article \cite{patrikis-klevdal:shimura}, two of the authors prove the compatibility of the canonical $\ell$-adic local systems attached to representations of $G^\ad$, at least for $\ell\neq p$. The main result of this article completes the picture and proves compatibility of the $\ell$-adic local systems and the canonical $F$-isocrystals. 

In order to state the result, we introduce some notation.
Fix a neat compact open subgroup $K_0 \subseteq G(\mbb{A}_f)$ and a point $s \in \Sh_K(G,X)(\CC)$. Let $S$ be the geometrically connected component of $\Sh_{K_0}(G,X)$ containing $s$; it is defined over a finite extension $E$ of the reflex field of $(G,X)$.
For each prime $\ell$, we write
    \[ \rho_{\ell} \colon \pi_1(S,s) \to G(\QQ_\ell), \qquad \rho_{\ell}^\ad \colon \pi_1(S,s) \to G^\ad(\QQ_\ell) \]
for the canonical $G(\QQ_\ell)$ and $G^\ad(\QQ_\ell)$-local systems on $S$ (see e.g.\ \cite[\S3.1]{patrikis-klevdal:shimura} or \cite[\S4]{cadoret-kret}).
As explained in  \cite{patrikis-klevdal:shimura}, there exists an integer $N$ and integral model $\ms{S}$ of $S$ that is flat over $\mc{O}_E[1/N]$ and such that for each prime $\ell$, $\rho_\ell^\ad$ extends
(necessarily uniquely) to a $G^\ad(\QQ_\ell)$-local system
        \[ \rho_\ell^\ad \colon \pi_1(\ms{S}[1/\ell], s) \to G^\ad(\QQ_\ell).\]
Let $v \in \Spec(\mc{O}_E[1/N])$ be a closed point of residue characteristic $p$, and let $\ms{S}_v$ be the fiber of $\ms{S}$ over $v$.
As a consequence of the main result of \cite{patrikis-klevdal:shimura}, the following holds after enlarging $N$:
For any representation $\xi$ of $G^\ad$, any closed point $x \in \ms{S}_v$, and any $\ell \neq p$, the characteristic polynomial of $(\xi\circ \rho_{\ell}^\ad)(\Frob_x)$ has coefficients in $\QQ$ and is independent of $\ell$.

With this in mind, we now state a simplified version of the main theorem of the present paper (\cref{thm:main}).
See Definition \ref{defn:G-F-isocrystal} and Example \ref{eg:isocrystals-over-point} for the terminology used here.

\begin{thm}\label{thm:main-intro}
With notation as above and after further enlarging $N$,\footnote{
See Remark \ref{rmk:loss-of-primes} for details.
}
there exists an overconvergent $G$-$F$-isocrystal $\mc{E}_v^\dagger$ on $\ms{S}_v$ that is compatible with the canonical $G^\ad(\QQ_\ell)$-local systems in the following sense: For any representation $\xi$ of $G^\ad$, any closed point $x \in \ms{S}_v$, and any $\ell \neq p$, the characteristic polynomial of the linearized Frobenius on $x^\ast \mc{E}_v^\dagger(\xi)$ has coefficients in $\QQ$ and is equal to the characteristic polynomial of $(\xi \circ \rho_{\ell}^\ad)(\Frob_x)$. 
\end{thm}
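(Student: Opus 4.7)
The overall strategy is: (i) construct an $F$-isocrystal on $\ms{S}_v$ from the canonical $p$-adic local system via the Esnault--Groechenig crystallinity theorem; (ii) upgrade it to a $G$-$F$-isocrystal using the Tannakian formalism together with Margulis superrigidity; (iii) promote to overconvergence via the Drinfeld--Kedlaya crystalline-to-\'etale companion construction; and (iv) verify the claimed identity of characteristic polynomials by combining the crystalline construction with the $\ell$-adic compatibility already established in \cite{patrikis-klevdal:shimura}.

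I would begin with the canonical $p$-adic local system $\rho_p \colon \pi_1(S,s) \to G(\QQ_p)$, viewed on $\ms{S}[1/p]$. In our setting ($G^\ad$ $\QQ$-simple of real rank $\geq 2$), Margulis superrigidity forces the monodromy of $\rho_p$ to be cohomologically rigid when composed with any representation $\xi$ of $G$, so the theorems of \cite{pila-shankar-tsimerman:andre-oort} and \cite{esnault-groechenig:cristallinity} apply. For each $\xi$ this produces an $F$-isocrystal $\mc{F}_v(\xi)$ on $\ms{S}_v$ whose $p$-adic realization at each closed point $x \in \ms{S}_v$ has the same Frobenius characteristic polynomial as $(\xi \circ \rho_p)(\Frob_x)$. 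By the functoriality of the Esnault--Groechenig construction in $\xi$, these assemble into a Tannakian fiber functor on $\Rep(G)$ valued in $F$-isocrystals, which I would interpret as a $G$-$F$-isocrystal $\mc{E}_v$; the $G$-structure is transported from the a priori $G$-structure on $\rho_p$.

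To promote $\mc{E}_v$ to an overconvergent $G$-$F$-isocrystal $\mc{E}_v^\dagger$, I would invoke the crystalline-to-\'etale companion theorem of \cite{drinfeld:pross} and \cite{kedlaya:etale-crystalline-1}. The required input is the existence of $\ell$-adic companions for some (equivalently, every) $\ell \neq p$, and this is supplied by $\rho_\ell^\ad$ together with the main result of \cite{patrikis-klevdal:shimura}: for any $\xi \in \Rep(G^\ad)$, the characteristic polynomial of Frobenius on $x^\ast\mc{E}_v(\xi)$ equals that of $(\xi \circ \rho_p^\ad)(\Frob_x)$ by construction, and by \cite{patrikis-klevdal:shimura} this equals the characteristic polynomial of $(\xi \circ \rho_\ell^\ad)(\Frob_x)$ for every $\ell \neq p$, and lies in $\QQ[T]$. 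After possibly enlarging $N$ to exclude the additional bad primes needed to invoke the companion theorem globally on $\ms{S}_v$, this yields both overconvergence of $\mc{E}_v$ and the desired compatibility statement in a single stroke.

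The main obstacle I anticipate is the assembly step, namely promoting the collection $\{\mc{F}_v(\xi)\}_\xi$ into a genuine $G$-$F$-isocrystal (not merely a collection of $F$-isocrystals indexed by $\Rep(G)$) whose $G$-structure matches that of $\rho_p$. The Esnault--Groechenig construction is representation-by-representation, so Tannakian coherence—compatibility with tensor products, duals, and direct sums, and the identification of the automorphism group of the resulting fiber functor with $G$ rather than merely with the Zariski closure of the monodromy—needs to be verified carefully, again invoking Margulis superrigidity to pin down the monodromy. A secondary technical point is the careful bookkeeping of the primes lost at each stage (integral models, crystallinity, companion construction), as flagged in the footnote of the theorem.
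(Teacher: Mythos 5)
There are two genuine gaps. First, your step (iii) misuses the companion machinery: the Drinfeld--Kedlaya crystalline-to-\'etale correspondence takes an \emph{overconvergent} $F$-isocrystal as input and produces $\ell$-adic companions; it does not and cannot promote the merely convergent $F$-isocrystal coming from crystallinity of $\rho_{p,v}$ to an overconvergent one. Overconvergence has to be proved separately, and this is where the real work of the paper lies: by \cite{diao-lan-liu-zhu:log-riemann-hilbert} (itself resting on superrigidity) the module with connection underlying $\mbb{D}_\crys(\rho_{p,v}^{\circ,\ast}\xi_0)$ is the canonical flat automorphic bundle, which extends to a logarithmic connection with nilpotent residues on a toroidal compactification; Margulis's vanishing $H^1(\Gamma,\xi_0)=0$ makes the corresponding point of the de Rham moduli stack isolated, so the Frobenius pullback $F^\ast$ of \cite{esnault-groechenig:cristallinity} permutes finitely many such points and one gets an $F^a$-structure on the logarithmic bundle, hence overconvergence; Kedlaya's full faithfulness ($\FaIsocdag\to\FaIsoc$) is then needed to transfer the convergent Frobenius structure to the overconvergent object. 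None of this is replaced by citing the companion theorem.

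Second, your step (iv) is circular. You assert that the Frobenius characteristic polynomial on $x^\ast\mc{E}_v(\xi)$ equals that of ``$(\xi\circ\rho_p^\ad)(\Frob_x)$'' \emph{by construction}, and then invoke \cite{patrikis-klevdal:shimura} to compare with $\ell\neq p$. But $\rho_p$ does not extend across the characteristic-$p$ fiber, so there is no element $\rho_p^\ad(\Frob_x)$ for $x\in\ms{S}_v$; the $p$-adic member of the putative compatible system at $x$ \emph{is} the linearized crystalline Frobenius of $\mc{E}_v^\dagger$, and its agreement with $(\xi\circ\rho_\ell^\ad)(\Frob_x)$ is precisely the content of the theorem, not an input. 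The results of \cite{patrikis-klevdal:shimura} only give $\ell$--$\ell'$ compatibility for $\ell,\ell'\neq p$. The paper closes this gap by: (a) showing the adjoint monodromy of $\mc{E}_v^{\dagger,\ad}$ is surjective (again via the canonical flat bundle and Zariski density of $\Gamma$), so Theorem \ref{thm:companions} produces a $\lambda$-adic companion $\rho_{\pi\leadsto\lambda,v}^\ad$ with dense image; (b) using superrigidity and the specialization map to show $\rho_{\pi\leadsto\lambda,v}^\ad$ and $\rho_{\ell,v}^\ad$ differ by an automorphism $\tau$ of $G^\ad$; and (c) an explicit computation at special points (canonical models, reciprocity, and a Lubin--Tate/crystalline-period calculation for the $p$-adic member) to show $\tau$ is inner, which yields the equality of semisimple Frobenius classes at all closed points and their rationality. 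Your proposal omits (b) and (c) entirely, and these are essential: without the special-point anchoring, density plus superrigidity only pins the two $\ell$-adic objects down up to an outer automorphism, which would not give equality of characteristic polynomials.
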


\begin{rmk}\label{rmk:full-compatibility}
Work of the fourth author \cite{patrikis:full-compatibility} shows that the canonical $G(\QQ_\ell)$-local systems extend to a suitable integral model and uses \cref{thm:main-intro} and the main theorem of \cite{patrikis-klevdal:shimura} to establish the improved compatibility of the canonical $G(\QQ_\ell)$-local systems and $G$-$F$-isocrystals on mod-$p$ fibers, rather than just their adjoint quotients. 
\end{rmk}

\begin{rmk}\label{rmk:crystallinity-intro}

The overconvergent $G$-$F$-isocrystal $\mc{E}_v^\dagger$ in \cref{thm:main-intro} is the restriction of a logarithmic $G$-$F$-isocrystal on a compactification of $\ms{S}_v$, and is constructed using work of Esnault-Groechenig \cite{esnault-groechenig:cristallinity}. It satisfies the expected relation that the underlying convergent $G$-$F$-isocrystal is $\mbb{D}_\crys(\rho_{p,v})$, though this is only known \emph{a posteriori} from the construction (see \cref{s.main-thm-proof} for details). 
\end{rmk}

In view of the remark, Theorem \ref{thm:main-intro} has the following concrete consequence for the Galois representations arising from Shimura varieties:

\begin{cor}
With notation as above and $N$ as in Theorem \ref{thm:main-intro}, let $y\in\ms S(\cO_F[1/N_y])$, where $F/E$ is a finite extension, and let $\rho_{\ell,y}^\ad\colon\Gal_F\to G(\QQ_\ell)$ be the Galois representation obtained by restricting $\rho_\ell^\ad$ (it is unramified outside $N_y$).
Then for any representation $\xi$ of $G^\ad$, any $p\nmid N_y$ and place $v$ of $F$ above $p$, and any $\ell\neq p$, the characteristic polynomial of the linearized Frobenius on $D_\mr{crys}(\xi\circ\rho_{p,y}^\ad|_{\Gal_{F_v}})$ has coefficients in $\QQ$ and is equal to the characteristic polynomial of $(\xi\circ\rho_{\ell,y}^\ad)(\Frob_v)$.
\end{cor}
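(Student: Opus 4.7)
The proof proceeds by reducing $y$ modulo $v$ and invoking \cref{thm:main-intro}. Write $u$ for the place of $E$ below $v$; then $y$ determines a $\kappa(v)$-point $\bar y_v\in\ms{S}_u(\kappa(v))$ factoring through some closed point $x\in\ms{S}_u$, and set $n=[\kappa(v):\kappa(x)]$.

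On the prime-to-$p$ side, fix $\ell\neq p$. Since $v\nmid\ell$ and $\rho_\ell^\ad$ extends to $\ms{S}[1/\ell]$, the local representation $\rho_{\ell,y}^\ad|_{\Gal_{F_v}}$ is unramified and factors through $\Gal(\overline{\kappa(v)}/\kappa(v))$; under the standard embedding $\Gal(\overline{\kappa(v)}/\kappa(v))\hookrightarrow\Gal(\overline{\kappa(x)}/\kappa(x))$ the Frobenius $\Frob_v$ maps to $\Frob_x^n$, so $(\xi\circ\rho_{\ell,y}^\ad)(\Frob_v)$ is conjugate to $(\xi\circ\rho_\ell^\ad)(\Frob_x)^n$. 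Its characteristic polynomial is therefore determined by that of $(\xi\circ\rho_\ell^\ad)(\Frob_x)$, which by \cref{thm:main-intro} is rational and independent of $\ell\neq p$.

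For the $p$-adic comparison, use \cref{rmk:crystallinity-intro}: the restriction of $\rho_p^\ad$ to the rigid generic fiber of the formal completion of $\ms{S}$ along $\ms{S}_u$ is crystalline, with associated convergent $G$-$F$-isocrystal equal to the one underlying $\mc{E}_u^\dagger$. Since $p\nmid N_y$, the point $y$ defines an $\cO_{F_v}$-point of this formal completion, and naturality of $D_\crys$ under pullback along such points yields a canonical identification
\[ D_\crys(\xi\circ\rho_{p,y}^\ad|_{\Gal_{F_v}}) \cong \bar y_v^\ast\mc{E}_u^\dagger(\xi) \]
of $F$-isocrystals over $\kappa(v)$. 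Unwinding the base change from $\kappa(x)$ to $\kappa(v)$ shows the linearized Frobenius of the right-hand side is the $n$-th power of the linearized Frobenius on $x^\ast\mc{E}_u^\dagger(\xi)$; by \cref{thm:main-intro} the latter has characteristic polynomial equal to that of $(\xi\circ\rho_\ell^\ad)(\Frob_x)$, and raising to the $n$-th power combines with the previous paragraph to give the corollary.

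Beyond \cref{thm:main-intro} the only substantive input is naturality of $D_\crys$ with respect to pullback in the crystalline comparison of Faltings, Guo--Reinecke, and Tan--Tong, together with the Esnault--Groechenig crystallinity that makes it applicable to $\rho_{p,u}$. Granted this naturality --- which ensures that specialization of the $p$-adic Galois representation along $y$ matches specialization of the convergent $F$-isocrystal along $\bar y_v$ --- only bookkeeping with residue-field extensions and Frobenius powers remains.
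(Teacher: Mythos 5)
The paper states this corollary without proof, treating it as an immediate consequence of Theorem \ref{thm:main-intro} together with Remark \ref{rmk:crystallinity-intro}; your proposal correctly fills in the omitted unwinding. The reduction to a closed point $x\in\ms{S}_u$ with $[\kappa(v):\kappa(x)]=n$, the identification of $\Frob_v$ with $\Frob_x^n$ on the $\ell$-adic side, and the base-change computation of the linearized Frobenius as an $n$-th power on the isocrystal side (per Example \ref{eg:isocrystals-over-point}) are all exactly the bookkeeping the authors have in mind, and you rightly isolate the one substantive input beyond the main theorem: the compatibility of the relative crystalline comparison of Faltings/Guo--Reinecke/Tan--Tong with pullback to a point, which identifies $D_\crys$ of the specialized Galois representation with the fiber of the convergent $F$-isocrystal underlying $\mc{E}_u^\dagger$. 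One small point worth making explicit (which your argument tacitly uses) is that the characteristic polynomial of $g^n$ is an integral polynomial function of the coefficients of the characteristic polynomial of $g$, so rationality and $\ell$-independence are preserved under passing to $n$-th powers; and that the pointwise comparison is valid even when $F_v/E_v$ is ramified, with $D_\crys$ landing in $\phi$-modules over $W(\kappa(v))[1/p]$, matching the fiber of the isocrystal over $\kappa(v)$.
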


\begin{rmk}
\cref{thm:main-intro} can be reformulated in the following manner. For each closed point $x \in \ms{S}_v$ with residue field $\kappa(x)$, we define $\gamma_{\ell,x} \coloneqq \rho_{\ell}^\ad(\Frob_x) \in G(\QQ_\ell)$, and let $[\gamma_{\ell, x}^\ad]$ be the image of $\gamma_{\ell, x}$ in  $(G^\ad\git G^\ad)(\QQ_\ell)$ (here $G^\ad\git G^\ad$ is the GIT quotient of $G^\ad$ by the conjugation action, the geometric points of which correspond to semisimple conjugacy classes in $G^\ad$). We let $\gamma_{p,x} \in G(W(\kappa(x))[1/p])$ be the linearized Frobenius of $\mc{E}_v^\dagger$ at $x$, and let $[\gamma_{p,x}^\ad]$ be its image in $(G^\ad\git G^\ad)(W(\kappa(x))[1/p])$. The results of \cite{patrikis-klevdal:shimura} and this paper can be summarized by saying that there exists an element $[\gamma_{0,x}^\ad] \in (G^\ad\git G^\ad)(\QQ)$ such that $[\gamma_{0,x}^\ad] = [\gamma_{\ell,x}^\ad]$ for all primes $\ell$, including $\ell = p$.

In this framework, \cref{thm:main-intro} (combined with \cite{patrikis-klevdal:shimura} and its improvement \cite{patrikis:full-compatibility}) can be seen as a step towards associating a Kottwitz triple to mod-$p$ points of Shimura varieties, a necessary step towards formulating the Langlands--Rapoport conjecture. We refer the reader to the end of the introduction of \cite{patrikis:full-compatibility} for a more detailed discussion of Kottwitz triples and what remains to be done for non-abelian type Shimura varieties. 
\end{rmk}

\subsection{Strategy of proof and overview}
The starting point of the proof of \cref{thm:main-intro} is work of Esnault--Groechenig \cite{esnault-groechenig:cristallinity}, \cite[Appendix A]{pila-shankar-tsimerman:andre-oort} which, for a fixed faithful representation $\xi_0$ of $G$, gives an $F$-isocrystal $\mc{E}_v^\dagger(\xi_0)$ (in fact they produce something stronger, a Fontaine--Laffaille module) satisfying a crystalline comparison with $\xi_0 \circ \rho_{p,v}$. We then define $\mc{E}_v^\dagger(\xi)$ for an arbitrary representation (functorially and compatibly with tensor products) by using the canonical $G$-bundle on the Shimura variety. Once $\mc{E}_v^\dagger$ is defined, we use the crystalline-to-\'etale companions construction of \cite{drinfeld:pross}, \cite{kedlaya:etale-crystalline-1, kedlaya:etale-crystalline-2}, \cite{abe-esnault:lefschetz} and compatibility at special points to make an argument as in \cite{patrikis-klevdal:shimura}. 

In \cref{s.background-isocrystals} we give relevant background on isocrystals and Fontaine--Laffaille modules. In \cref{s.companions} we discuss companions of overconvergent $G$-$F$-isocrystals when $G$ is semisimple. In \cref{s.main-thm-statement} we recall the Shimura variety setup that we use and state the main theorems precisely. Finally, in \cref{s.main-thm-proof} we prove the main theorem. 

\subsection{Acknowledgements}
All authors were present and worked on this project at the conference ``Local Systems in Algebraic Geometry'' at The Ohio State University in May 2024, which was funded by NSF Grant No.\ DMS-2231565.
J.H.\ was supported by the NSF under
Grant No.\ DMS-2231565. S.P.\ was supported by the NSF under Grant No.\ DMS-2120325. 
K.S.K.\ was supported by the NSF under Grants No.\ DMS-2053473 and DMS-2401536, and during part of this work by the IAS (School of Mathematics) and the Simons Foundation (via the Simons Fellowship program).
We thank Marco D'Addezio for pointing out an inaccurate statement regarding isocrystals. C.K.\ would like to thank Keerthi Madapusi, Ananth Shankar and Alex Youcis for helpful conversations which led to a simplification and improved clarity in the proof of \cref{thm:overconvergent-Dcrys}-(1).  

\section{Background on isocrystals and Fontaine--Laffaille modules}\label{s.background-isocrystals}
In this section we recall conventions and definitions for isocrystals and Fontaine--Laffaille modules. 

\subsection{Modules with integrable connections}

Let $T$ be a regular integral Noetherian scheme and $X$ a smooth scheme over $T$. We denote by $\MIC(X)$ the category of modules with integrable connection relative to $T$ (for clarity we omit the $T$ from the notation; in applications, the relevant base will be clear from context).
Its objects are coherent sheaves $\mc{E}$ on $X$ with a connection $\nabla \colon \mc{E} \to \mc{E} \otimes_{\mc{O}_X} \Omega^1_{X/T}$ satisfying the Leibniz rule and $\nabla^2 = 0$. This is a rigid $\mc{O}_T(T)$-linear abelian tensor category that has an internal Hom. For a thorough treatment, see Katz \cite[\S 1]{katz:monodromy-theorem}. 

Let $j \colon X \hookrightarrow \bar{X}$ be a compactification relative to $T$ such that $\bar{X} \to T$ is smooth and proper and $D = \bar{X} \setminus X$ is a relative strict normal crossings divisor over $T$. This latter condition means that $D$ is a reduced effective Cartier divisor such that 1) there exists an \'etale map $U \to \mathbb{A}^d_T = \Spec(\mc{O}_T[t_1,\ldots, t_d])$ such that $D|_U$ is the inverse image of the vanishing locus $V(t_1\cdots t_s) \subseteq \mathbb{A}^d_T$ for some $s \leq d$; and 2) if $D = \sum_{i \in I} D_i$ is a sum of irreducible divisors $D_i$, then for any $J \subset I$, the intersection $\bigcap_{j \in J} D_j$ is smooth over $T$. 

We denote by $\MIC(\bar{X}, D)$ the category of modules with integrable connection (relative to $T$) that are logarithmic with respect to $D$. Its objects consist of coherent sheaves $\bar{\mc{E}}$ on $\bar{X}$ with a connection $\nabla \colon \bar{\mc{E}} \to \bar{\mc{E}} \otimes_{\mc{O}_{\bar{X}}} \Omega^1_{\bar{X}/T}(\log D)$ satisfying the Leibniz rule and $\nabla^2 = 0$. For each irreducible component $D_i$ of $D$ there is a residue map $\mr{Res}_{D_i} \colon \mc{T}_{\bar{X}/T}(\log D) \to \mr{End}(\bar{\mc{E}}|_{D_i})$. We let $\MIC^\nilp(\bar{X}, D)$ be the full subcategory of $\MIC(\bar{X}, D)$ consisting of objects $(\bar{\mc E},\nabla)$ such that $\bar{\mc{E}}$ is locally free and ${\nabla}$ has nilpotent residues along $D$. This is an exact rigid tensor subcategory that is closed under extensions. 

Finally, suppose $X$ is a smooth rigid analytic variety over a field $K$ that is complete with respect to a non-archimedean valuation. We let $\MIC(X)$ denote the category of modules with integrable connection, which has the same definition as for schemes. If $(\mc{E}, \nabla) \in \MIC(X)$ and $U \subseteq X$ is an open affinoid open subset such that $\Omega^1_{U/K}$ is freely generated by $dt_1,\dots,dt_d$ for some $t_1,\dots,t_d\in \mc{O}(U)$, then $\mc{O}(U)$ is a Banach $K$-algebra, and the finite $\mc{O}(U)$-module $\mc{E}(U)$ is complete with respect to an $\mc{O}(U)$-module norm which is unique up to equivalence.
We say the connection $\nabla$ is \textit{convergent} if for all such $U$ and $t_1,\dots,t_d$, the power series
\[
\sum_{i_1,\dots,i_d\geq0}
\frac{\tau_1^{i_1}\cdots\tau_d^{i_d}}{i_1!\cdots i_d!}
\cdot
\frac{\partial^{i_1}}{\partial t_1^{i_1}}
\cdots
\frac{\partial^{i_d}}{\partial t_d^{i_d}}
(v)
\]
converges for any $v\in\mc{E}(U)$ and any $\tau_1,\dots,\tau_d\in K'$ (for $K'/K$ a finite extension) with $|\tau_j|<1$.

\subsection{Isocrystals}\label{ss.(over)convergent-isocrystals}

For this subsection, we fix the following notation:

\begin{itemize}
\item
$\kappa$ is a perfect field of positive characteristic $p$ with ring of $p$-typical Witt vectors $\mc{O}_K \coloneqq W(\kappa)$, and $K \coloneqq \mc{O}_K[1/p]$ is the field of fractions of $\mc{O}_K$. Let $\sigma \colon K \to K$ be the lift of the $p$-power Frobenius automorphism of $\kappa$. 
\item
$\ms{X}$ is a smooth separated $\mc{O}_K$-scheme with special fiber $\ms{X}_\kappa$, generic fiber $X_K \coloneqq \ms{X}_K$, and $p$-adic completion $\wh{\ms{X}}$, a smooth formal $\mc{O}_K$-scheme.
\item
$\wh{\ms{X}}_\eta$ is the rigid generic fiber of $\wh{\ms{X}}$, and $X_K^\an$ is the analytification of $X_K$.
These are smooth rigid-analytic $K$-varieties, and {$X_K^\an$ is a strict neighborhood of $\wh{\ms{X}}_\eta$. For example, when $\ms{X} = \mathbb{A}^1_{\mc{O}_K}$, $\wh{\ms{X}}_\eta$ is the closed unit disk in $X_K^\an = \mathbb{A}^{1,\an}_K$. }
\end{itemize}
We let $\Isoc(\ms{X}_\kappa)$ denote the category of convergent isocrystals on $\ms{X}_\kappa$ 
and $\Isocdag(\ms{X}_\kappa)$ the category of overconvergent isocrystals (see \cite{ogus:f-isoc-2, berthelot:cohomologie-rigide, le-stum:rigid-cohomology}). Both are rigid abelian tensor categories depending only on $\ms{X}_\kappa$. These categories can be defined for arbitrary smooth varieties over $\kappa$ but admit a simple description for varieties with a global lift to characteristic $0$: notation as above, $\Isoc(\ms{X}_\kappa)$ is realized as the full subcategory of $\MIC(\wh{\ms{X}}_\eta)$ consisting of convergent connections  \cite[Theorem 2.15]{ogus:f-isoc-2}, while an object of $\Isocdag(\ms{X}_\kappa)$ is a convergent connection on $Y$, where $Y$ is any strict neighborhood of $\wh{\ms{X}}_\eta$ in $X_K^\an$, and a morphism in $\Isocdag(\ms{X}_\kappa)$ is a morphism of connections which is defined on some common strict neighborhood of $\wh{\ms{X}}_\eta$ in $X_K^\an$. 
The functor $\Isocdag(\ms{X}_\kappa)\to\Isoc(\ms{X}_\kappa)$ is typically \textit{not} fully faithful, so one should be careful to distinguish between morphisms of convergent and of overconvergent isocrystals.

The category $\Isocdagq(\ms{X}_\kappa)$ has an endomorphism $F^*$ induced by the absolute Frobenius on $\ms{X}_\kappa$, and an (over)convergent \textit{$F^a$-structure} ($a \geq 1$) on $\mc{E} \in\Isocdagq(\ms{X}_\kappa)$ is an isomorphism
$\Phi\colon F^{*a}\mc{E} \xrightarrow{\sim} \mc{E}$ in $\Isocdagq(\ms{X}_\kappa)$. The category $\FaIsocdagq(\ms{X}_\kappa)$ consists of pairs $(\mc{E},\Phi)$ of an (over)convergent isocrystal and an (over)convergent $F^a$-structure, a morphism being a map of (over)convergent isocrystals which respects $F^a$-structures. Finally, $\IsocdagqF(\ms{X}_\kappa)$ is the full subcategory of $\Isocdagq(\ms{X}_\kappa)$ consisting of objects for which every irreducible subquotient admits an (over)convergent $F^a$-structure for \textit{some} $a$ \cite[Definition 5.2]{daddezio-esnault:universal-ext}. For any (over)convergent $F^a$-isocrystal on $\ms{X}_k$, the underlying (over)convergent isocrystal lies in $\IsocdagqF(\ms{X})$: the $F^a$-structure permutes the Jordan--H\"older constituents, and hence each irreducible subquotient admits an $F^{a'}$-structure for some $a' \geq a$.

\begin{eg}
\label{eg:isocrystals-over-point}
The categories $\Isocdagq(\Spec(\kappa))$ (abbreviated $\Isocdagq(\kappa)$) are both equivalent to the category of finite-dimensional $K$-vector spaces. Likewise, $\FIsocdagq(\kappa)$ are both equivalent to the categories of pairs $(V, \Phi)$ consisting of a finite-dimensional $K$-vector space $V$ and a $\sigma$-semilinear automorphism $\Phi \colon V \xrightarrow{\sim} V$. If $\kappa$ is finite and $|\kappa| = q = p^r$, then $\Phi^r$ is a $K$-linear endomorphism of $V$, which we call the ($q$-)\emph{linearized} Frobenius. We have the following compatibility with base change: if $\kappa' \supseteq \kappa$ is a finite extension, $|\kappa'| = q^s$, and $V\in\FIsoc(\kappa)$, the $q^s$-linearized Frobenius of $V \otimes_K W(\kappa')[p^{-1}] \in \FIsoc(\kappa')$ is the $W(\kappa')[p^{-1}]$-linear extension of the $s$-th power of the $q$-linearized Frobenius of $V$.  
\end{eg}

%
%
%

Finally, if $\bar{\ms{X}}_\kappa$ is a good compactification of $\ms{X}_\kappa$ (so that $\bar{\ms{X}}_\kappa$ is smooth and proper over $\kappa$, and $\ms{D}_\kappa = \bar{\ms{X}}_\kappa \setminus {\ms{X}}_\kappa$ is a strict normal crossings divisor), we write $\FIsoc(\bar{\ms{X}}_\kappa, \ms{D}_\kappa)$ for the category of convergent log-$F$-isocrystals for the log-structure determined by $\ms{D}_\kappa$ (see e.g.\ \cite[Definition 7.1]{kedlaya:notes-on-isocrystals}).
The restriction functor $\FIsoc(\bar{\ms{X}}_\kappa, \ms{D}_\kappa) \to \FIsocdag(\ms{X}_\kappa)$ is fully faithful (\cite[Theorem 6.4.5]{kedlaya:semistable-reduction}), and in particular any overconvergent $F$-isocrystal admits at most one extension on $\ms{X}_\kappa$ to a convergent log-$F$-isocrystal.
We say an overconvergent $F$-isocrystal is \emph{docile} (with respect to the compactification) if it extends to a convergent log-$F$-isocrystal that has unipotent monodromy in the sense of \cite[Definition 4.4.2]{kedlaya:semistable-reduction}; note that a docile overconvergent $F$-isocrystal is necessarily tame  \cite[Definition 2.3.2]{kedlaya:etale-crystalline-1}. In the cases we will consider, $(\bar{\ms{X}}_\kappa, \ms{D}_\kappa)$ is the special fiber of a good compactification $(\bar{\ms{X}}, \ms{D})$ of $\ms{X}$ over $\mc{O}_K$ with generic fiber $(\bar{X}_K,D_K)$. In this case, we can represent objects of $\FIsoc(\bar{\ms{X}}_\kappa, \ms{D}_\kappa)$ by objects in $\MIC(\bar{X}_K^\an, D_K^\an) \cong \MIC(\bar{X}_K, D_K)$ (the latter equivalence by rigid GAGA since $\bar{X}_K$ is proper).

In summary, we have the following commuting diagram of forgetful/restriction functors:
\[
\begin{tikzcd}
\FIsoc(\bar{\ms{X}}_\kappa, \ms{D}_\kappa) \arrow[r] & \FIsocdag(\ms{X}_\kappa)\arrow[r]\arrow[d]&
\IsocdagF(\ms{X}_\kappa)\arrow[r]\arrow[d]&
\Isocdag(\ms{X}_\kappa)\arrow[d]\\
 & \FIsoc(\ms{X}_\kappa)\arrow[r]&
\IsocF(\ms{X}_\kappa)\arrow[r]&
\Isoc(\ms{X}_\kappa).
\end{tikzcd}
\]

\subsection{Logarithmic Fontaine--Laffaille modules and Higgs--de Rham flows}\label{ss.fontaine-laffaille}
For all definitions regarding formal schemes, we refer the reader to \cite[Chapter 1]{fujiwara-kato:foundations-rigid-geometry}. Let $\kappa$ be a perfect field of characteristic\footnote{We take $p>2$ to apply results of \cite{esnault-groechenig:cristallinity} on Frobenius pullback.} $p>2$ and $W \coloneqq W(\kappa)$ the ring of $p$-typical Witt vectors with fraction field $K \coloneqq W[p^{-1}]$. By a $p$-adic formal scheme $\wh{Y}$ over $W$, we mean a formal scheme $\wh{Y}$ such that $p$ generates an ideal of definition of $\wh{Y}$, together with an adic morphism $\wh{Y} \to \Spf\, W$. We write $Y_n = \wh{Y} \times_{\Spf\, W} \Spec\, W/(p^{n+1})$, which is the scheme $(|\wh{Y}|, \mc{O}_{\wh{Y}}/(p^{n+1}))$ over $W/(p^{n+1})$.

 Let $\wh{X}$ be a smooth formal scheme over $\Spf\, W$ and $\wh{D} \subseteq \wh{X}$ a relative SNC divisor on $\wh{X}$.  This means that $\wh{D}$ is a relative SNC divisor if it is an effective Cartier divisor\footnote{An effective Cartier divisor $\wh{D}$ on $\wh{X}$ is defined as a locally principal closed ideal sheaf $\mc{I}_{\wh{D}} \subseteq \mc{O}_{\wh{X}}$; see \cite[Definition 21.1.6]{ega:IV}.} each point $x \in \wh{X}$ has an \'etale neighborhood $\wh{U}$ that is \'etale over $\Spf\, W\langle T_1, \ldots, T_d\rangle$ and $\wh{D}$ is generated by the pullback of the ideal sheaf generated by $T = T_1\cdots T_s$ for some $s$.  We let $\Omega^1_{\wh{X}}(\log\, \wh{D})$ be the sheaf of $W$-linear K\"ahler differentials that are logarithmic with respect to $\wh{D}$; this can equivalently be constructed either as the inverse limit of $\Omega^1_{X_n}(\log\, D_n)$ or via the functor of points of representing of continuous derivations from $\mc{I}_{\wh{D}}$ (following \cite[Ch.\ 1 \S 5.1 (c)]{fujiwara-kato:foundations-rigid-geometry}). The category $\MIC(\wh{X}, \wh{D})$ of coherent sheaves with $W$-linear logarithmic connections is defined verbatim as for schemes, and we write $\MIC_{\mr{lf}}(\wh{X}, \wh{D})$ for the full subcategory where the underlying coherent sheaf is locally free. 
 
We introduce two key constructions from \cite{esnault-groechenig:cristallinity} (which are implicitly defined in Faltings \cite[page 33, d)]{faltings:crys-cohom-p-adic-galois-reps},  in the non-logarithmic case). First is the Artin--Rees construction \cite[Definition 3.27]{esnault-groechenig:cristallinity}, which is a functor
 \begin{equation}\label{eqn:artin-reese}
 	\AR \colon \FilMIC(\wh{X}, \wh{D}) \to \pMIC(\wh{X}, \wh{D}).
\end{equation}
Here $\pMIC(\wh{X}, \wh{D})$ is the category of integrable $p$-connections \cite[Definition 3.8]{esnault-groechenig:cristallinity}, and $\FilMIC(\wh{X}, \wh{D}) $ consists of triples $(M, \nabla, \Fil)$ where $(M, \nabla) \in \MIC(\wh{X}, \wh{D})$ and $\Fil$ is a Griffiths-transverse filtration on $M$.
This means that $\Fil$ is a decreasing filtration that is separated and exhaustive ($\Fil^iM = 0$ if $i \gg 0$ and $\Fil^iM = M$ if $i \ll 0$), each $\Fil^i M$ is Zariski-locally on $\wh{X}$ a direct summand of $M$, and $\Fil$ satisfies Griffiths transversality: $\nabla(\Fil^i M) \subseteq \Fil^{i-1}M \otimes_{\mc{O}_{\wh{X}}} \Omega^1_{\wh{X}}(\log\, \wh{D})$. Then 	
	\[ \AR(M, \nabla, \Fil) \coloneqq \bigoplus_{i \in \ZZ} \Fil^i M/{\sim} \quad  \text{where} \quad p[v]_i \sim [v]_{i-1} \text{ for } v \in \Fil^i M,  \]
and we have written $[v]_i$ to denote that we consider $v$ placed in the $i$th graded piece. This definition is equivalent to that given in \cite[Definition 3.27]{esnault-groechenig:cristallinity} when $M$ is $p$-torsion-free, in which case $[v]_i \mapsto v \otimes p^{-i}$ identifies $\AR(M, \nabla, \Fil)$ with $\sum_{i \in \ZZ} \Fil^i M \otimes p^{-i} \subseteq M \otimes_{\ZZ_p} \QQ_p$. Using this latter description we observe that $M[p^{-1}] \cong \AR(M, \nabla, \Fil)[p^{-1}]$ when $M$ is $p$-torsion-free. The connection $\nabla^\AR$ on $\AR(M, \nabla, \Fil)$ is defined by $\nabla^\AR([v]_i) \coloneqq [\nabla(v)]_{i-1}$, which is a $p$-connection: given $f$ a local section of $\mc{O}_{\wh{X}}$ and $v \in \Fil^iM$, we have ${\nabla}^\AR(f[v]_i) = [v \otimes df + f \nabla(v)]_{i-1} = p[v]_i \otimes df + f{\nabla}^\AR[v]_i$. 

The second construction is the flow functor \cite[Definition 3.28]{esnault-groechenig:cristallinity} 
\begin{equation}\label{eqn:flow-functor}
    \Phi \colon \FilMIC_{[0, p-1]}(\wh{X}, \wh{D}) \to \MIC(\wh{X}, \wh{D})
\end{equation}
which is characterized by  a canonical isomorphism \cite[Equation (5), Def. 3.28]{esnault-groechenig:cristallinity}  $\Phi \cong \wh{F}^\ast \circ \AR$ whenever $\wh{F} \colon \wh{X} \to \wh{X}$ is a logarithmic Frobenius lift, i.e.\ a morphism of formal schemes that preserves $\wh{D}$ and whose mod-$p$ reduction is the absolute $p$-Frobenius on $X_0$. A couple of remarks are in order.
First, $\FilMIC_{[0, p-1]}(\wh{X}, \wh{D})$ consists of the full subcategory of $\FilMIC(\wh{X}, \wh{D})$ where the filtration has graded pieces concentrated in degrees $[0, p-1]$ (i.e. $\Fil^0M = M$ and $\Fil^{p}M = 0$). It is necessary to restrict to such filtrations so that the gluing isomorphism \cite[Equation (5), Def. 3.28]{esnault-groechenig:cristallinity} does not have any $p$ in the denominator (recall that the $G_j - F_j$ in the notation of \emph{loc.\ cit.\ }is divisible by $p$ since $\wh{F}, \wh{G}$ lift the same map mod $p$). Second, to define the connection $\nabla^\Phi$ on $\Phi(M, \nabla, \Fil)$, it suffices to do so when $\wh{X} = \Spf R$ is affine, $\wh{D} = V(T)$ for $T \in R$, and $\phi \colon R \to R$ is a logarithmic Frobenius lift. Identifying $M$ and $\AR(M) \coloneqq \AR(M, \nabla, \Fil)$ with their $R$-modules of global sections, $\nabla^\Phi$ corresponds to the map $(1 \otimes \frac{\phi_\ast}{p}) \circ \nabla^\AR \colon \AR(M) \otimes_{R} \to \AR(M) \otimes_{\phi} \Omega^1_R(\log\, T)$. Explicitly, given $m \in \Fil^iM$ such that $\nabla(m) = \sum_j m_j \otimes \omega_j$ for $m_j \in \Fil^{i-1}M, \omega_j \in \Omega^1_R(\log\, T)$, we have 
	\[ \nabla^\Phi([m]_i \otimes 1) \coloneqq \sum_j [m_j]_{i-1} \otimes \frac{\phi_\ast \omega_j}{p}, \]
where we write $\phi_\ast(f\cdot dg) = \phi(f)\cdot d\phi(g)$ for $f, g \in R$.
The division by $p$ here is valid as $\Omega^1(\log\, T)$ is $p$-torsion free and $\phi_\ast(\Omega^1(\log\, T)) \subseteq p\Omega^1(\log\, T)$ since $\phi$ is a Frobenius lift. 

We can finally introduce the category of logarithmic Fontaine--Laffaille modules $\MF_{[0,p-1]}^\nabla(\wh{X}, \wh{D})$. The objects consist of quadruples $(M, \nabla, \Fil, \varphi_M)$ where 
	\[ (M, \nabla, \Fil) \in \FilMIC(\wh{X}, \wh{D}) \text{\quad and \quad}\varphi_M \colon \Phi(M, \nabla, \Fil) \xrightarrow{\sim} M, \]
the latter being an isomorphism in $\MIC(\wh{X}, \wh{D})$. Morphisms are defined as those in $\MIC(\wh{X}, \wh{D})$ that preserve the filtration and the Frobenius structure. We also introduce the full subcategory $\MF_{[0,p-1], \mr{lf}}(\wh{X}, \wh{D})$ where the underlying coherent sheaf is locally free. 

\begin{rmk}
The category $\MF_{[0,p-1]}^\nabla(\wh{X}, \wh{D})$ agrees with Faltings's original category \cite[page 33, d)]{faltings:crys-cohom-p-adic-galois-reps} when $\wh{D}$ is empty. It also agrees with the category of logarithmic Fontaine--Faltings modules from \cite[\S 2]{liu-yang-zuo:log-crystalline}; one sees this by comparing the gluing isomorphisms of \cite[Definition 3.28 (b)]{esnault-groechenig:cristallinity} and \cite[Eqn.\ (2.2)]{liu-yang-zuo:log-crystalline}. {It seems there is a small typo in \cite[Eqn.\ (2.2)]{liu-yang-zuo:log-crystalline}, where the first ``$\min$'' should be a ``$\max$'', as one should recover the formula used in the proof of \cite[Theorem 2.3]{faltings:crys-cohom-p-adic-galois-reps}.}
\end{rmk}

\begin{rmk}\label{rmk:MF-to-FIsoc}
If $M \in \MF^{\nabla}_{[0,p-1], \mr{lf}}(\wh{X}, \wh{D})$, then $\AR(M)[p^{-1}] = M[p^{-1}]$ and thus $M[p^{-1}]$ is a (filtered) $F$-isocrystal. Thus inverting $p$ gives a functor $\MF^{\nabla}_{[0,p-1]}(\wh{X}, \wh{D}) \to \FIsoc(X_0, D_0)$. 
\end{rmk}
 

\begin{rmk}\label{rmk:HdR-flow-explicit}
A category of $f$-periodic logarithmic Higgs--de Rham flows is not explicitly defined in \cite{esnault-groechenig:cristallinity}, but can easily be defined using $\Phi$. An $f$-periodic logarithmic Higgs--de Rham flow on $(\wh{X}, \wh{D})$ consists of the data $M = (M_{(0)}, \nabla_0, \Fil_0, \Fil_1, \ldots, \Fil_{f-1}, \phi)$ where 
\begin{itemize}
    \item $(M_{(0)}, \nabla_0)\in\MIC_{\lf}(\wh{X}, \wh{D})$;
    \item $\Fil_i$ for $i = 0,\ldots, f -1$ is a filtration on $(M_{(i)}, \nabla_i) \coloneqq \Phi(M_{(i-1)}, \nabla_{i-1}, \Fil_{i-1})$  which is Griffiths-transverse; and 
    \item $\varphi_M \colon \Phi(M_{(f-1)}, \nabla_{f-1}, \Fil_{f-1}) \xrightarrow{\sim} (M_{(0)}, \nabla_0)$ is an isomorphism in $\MIC(\wh{X}, \wh{D})$. 
\end{itemize} 
A morphism $M \to N$ is a morphism $M_{(0)} \to N_{(0)}$ of modules with integrable connection that preserves all filtrations (after applying appropriate iterates of $\Phi$). We write $\HdR_{[0,p-1]}^f(\wh{X}, \wh{D})$ for the category of $f$-periodic Higgs--de Rham flows such that each filtration is concentrated in degrees $[0,p-1]$. 
With these definitions, $\HdR_{[0,p-1]}^1(\wh{X}, \wh{D})$ is \emph{equal} to $\MF_{[0,p-1], \mr{lf}}^{\nabla}(\wh{X}, \wh{D})$. When $\mathbb{Z}_{p^f} := W(\mathbb{F}_{p^f}) \subseteq W$, the category $\HdR^f_{[0,p-1]}(\wh{X}, \wh{D})$ is equivalent to  $\MF^{\nabla}_{[0,p-1], \mr{lf}}(\wh{X}, \wh{D}) \otimes_{\ZZ_p} \mathbb{Z}_{p^f}$ the latter category being the scalar extension of the $\ZZ_p$-linear category to a $\mathbb{Z}_{p^f}$-linear category (objects are Fontaine--Laffaille modules with endomorphism structure by $\mathbb{Z}_{p^f}$, and morphisms respect the $\ZZ_{p^f}$-structure). The functor sends $M = (M_{(0)}, \nabla_0, \Fil_0, \ldots, \Fil_{f-1})$ to $N\coloneqq M_{(0)} \oplus \cdots \oplus M_{(f-1)}$ with the direct sum of filtrations/connections, $\varphi_N$ the direct sum of the identifications $\Phi(M_{(i)}, \nabla, \Fil_i) = M_{(i+1)}$ using $\varphi_M$ when $i = f-1$, and the $\mathbb{Z}_{p^f}$-endomorphism structure determined by the action of a primitive $(p^f-1)$-th root of unity $\zeta \in \ZZ_{p^f}$ which acts on $M_{(i)}$ by $\zeta^{p^i}$.
\end{rmk}

\section{Companions}\label{s.companions}
Let $\kappa$ be a finite field of characteristic $p$, let $K \coloneqq W(\kappa)[p^{-1}]$, and let $X$ be a smooth, geometrically connected scheme over $\kappa$. Let $\ol{\QQ}$ be an algebraic closure of $\QQ$, and for a finite place $\lambda$ of $\ol{\QQ}$ dividing a rational prime $\ell$, we write $\ol{\QQ}_\lambda$ for the algebraic closure of $\QQ_\ell$ in the completion of $\ol{\QQ}$ at $\lambda$. Let $\pi$ be a finite place of $\ol{\QQ}$ dividing $p$. 

Let $\FIsocdag(X)_{\ol{\QQ}_\pi} \coloneqq \FIsocdag(X) \otimes_{\QQ_p} {\ol{\QQ}_\pi}$ be the $\ol{\QQ}_\pi$-linearization:
    \[ \FIsocdag(X)_{\ol{\QQ}_\pi} = 2\text{-}\mr{colim}_{E \subseteq \ol{\QQ}_\pi} \FIsocdag(X)_E \]
where $E$ ranges over finite extensions of $\QQ_p$ in $\ol{\QQ}_\pi$, and the linearization is as in \cite[\S 3]{deligne-milne}. 

\begin{eg}\label{eg:Qpbar-isocrystals-over-points}
If $X = \Spec(\kappa)$, then objects in $\FIsoc(\kappa)_{\ol{\QQ}_\pi}$ are pairs $(W, \Phi)$ consisting of a finite $K \otimes_{\QQ_p} \ol{\QQ}_\pi$-module $W$ with an $\sigma \otimes 1$-semilinear isomorphism $\Phi \colon W \xrightarrow{\sim} W$. For $\iota \colon K \hookrightarrow \ol{\QQ}_\pi$, let $e_\iota \in K \otimes_{\QQ_p} \ol{\QQ}_\pi$ be the element corresponding to projection onto the $\iota$-component of  $\prod_{\iota \colon K \hookrightarrow \ol{\QQ}_\pi} \ol{\QQ}_\pi \cong K \otimes_{\QQ_p} \ol{\QQ}_\pi$. If we fix a single embedding $\tau \colon K \hookrightarrow \ol{\QQ}_\pi$, the submodule $e_\tau W \subseteq W$ is a $\ol{\QQ}_\pi$-subspace, which is stable under $\Phi^r$, where $r = [\kappa : \mbb{F}_p]$. The assignment $(W, \Phi) \mapsto (e_\tau W, \Phi^r)$ gives an equivalence of $\FIsoc(\kappa)_{\ol{\QQ}_\pi}$ with the category of finite dimensional $\ol{\QQ}_\pi$-vector spaces with $\ol{\QQ}_p$-\emph{linear} automorphisms. (A quasi-inverse functor sends $V$ to $\mr{Ind}_{\langle \sigma^r\rangle}^{\langle \sigma \rangle} V$, where $\sigma^r$ acts by the distinguished automorphism.) The $\ol{\QQ}_\pi$-linear automorphism is exactly the $|\kappa|$-linearized Frobenius of \cref{eg:isocrystals-over-point}. 
\end{eg}

The category $\FIsocdag(X)_{\ol{\QQ}_\pi}$ is a neutral Tannakian category over $\ol{\QQ}_\pi$.\footnote{
The same holds for $\FIsoc(X)_{\ol\QQ_\pi}$, but in this section we will only focus on the overconvergent $F$-isocrystals, as these are objects that have companions.
}
Indeed, fix a `geometric point' $\bar{x} = (x, \tau)$, i.e.\ $x \in X(\kappa')$ for $\kappa'$ a finite extension of $\kappa$, and an embedding $\tau \colon W(\kappa')[p^{-1}] \to \ol{\QQ}_\pi$. The composition (with the last map as in \cref{eg:Qpbar-isocrystals-over-points})
    \[ \omega_{\bar{x}} \colon \FIsocdagq(X) \xrightarrow{x^\ast} \FIsoc(\kappa') \to \FIsoc(\kappa')_{\ol{\QQ}_\pi} \to \Vect_{\ol{\QQ}_\pi} \]
is a fiber functor. We let $\pi_1^{\FIsocdag}(X, \bar{x}) = \ul{\Aut}^\otimes(\omega_{\bar{x}})$ be the associated Tannakian group.
This is an affine algebraic group over $\ol{\QQ}_\pi$, and $\omega_{\bar{x}}$ induces an isomorphism $\FIsocdag(X)_{\ol{\QQ}_\pi} \xrightarrow{\sim} \Rep(\pi_1^{\FIsocdag}(X,\ol{x}))$ \cite[Theorem 2.11]{deligne-milne}.

\begin{rmk}
We sometimes write $\pi_1^{\FIsocdag}(X)$ without a base point. In such situations, all claims are independent of base points. 
\end{rmk}

\begin{defn}
\label{defn:G-F-isocrystal}
Let $G$ be a connected linear algebraic group over $\QQ_p$. An \emph{overconvergent $G$-$F$-isocrystal} on $X$ is an exact $\QQ_p$-linear tensor functor $\mc{E} \colon \Rep(G) \to \FIsocdagq(X)$. We write $\mc{E}_{\ol{\QQ}_\pi} = \mc{E} \otimes_{\QQ_p} \ol{\QQ}_\pi \colon \Rep(G) \to \FIsocdagq(X) \otimes_{\QQ_p} \ol{\QQ}_\pi$ for the $\ol{\QQ}_\pi$-linear extension of $\mc{E}$ (an overconvergent $G$-$F$-isocrystal with coefficents in $\ol{\QQ}_\pi$). We denote the Tannakian dual of $\mc{E}_{\ol{\QQ}_\pi}$ by 
    \[ \rho_{\mc{E}} \colon \pi_1^{\FIsocdag}(X) \to G_{\ol{\QQ}_\pi}, \]
which is well defined up to $G(\ol{\QQ}_\pi)$-conjugacy. 
\end{defn}

We next define the Frobenius conjugacy class of $\mc{E}$ at a point $x \in X(\kappa')$ for $\kappa'$ a finite extension of $\kappa$.
Choose $\tau \colon W(\kappa') \to \ol{\QQ}_\pi$ and let $\bar{x} = (x, \tau)$. Then $\omega_{\bar{x}} \circ \mc{E}$ is a fiber functor $\Rep(G) \to \Vect_{\ol{\QQ}_\pi}$, and the $|\kappa'|$-linearized Frobenius automorphism of \cref{eg:Qpbar-isocrystals-over-points} gives a distinguished element in $\ul{\Aut}^\otimes(\omega_{\bar{x}} \circ \mc{E})$.
By \cite[Theorem 3.2]{deligne-milne}, there is an isomorphism $\ul{\Aut}^\otimes(\omega_x \circ \mc{E}) \cong G_{\ol{\QQ}_\pi}$ (unique up to $G(\ol{\QQ}_\pi)$-conjugation), we let $\rho_{\mc{E}}(\Frob_x) \subseteq G(\ol{\QQ}_\pi)$ be the $G(\ol{\QQ}_\pi)$-conjugacy class given by the image of the $|\kappa'|$-linearized Frobenius in $\ul{\Aut}^\otimes(\omega_{\bar{x}} \circ \mc{E})$.
(This is independent of the choice of $\tau$.)
The Frobenius conjugacy class gives a well defined semisimple conjugacy class in $[G\git G](\ol{\QQ}_\pi)$.

\begin{defn}\label{defn:weak-companions}
Let $G$ be a connected linear algebraic group over $\ol{\QQ}$. Let $\mc{E}$ be an overconvergent $G$-$F$-isocrystal on $X$ with coefficients in $\ol{\QQ}_\pi$. Let $\lambda$ be a finite place of $\ol{\QQ}$ lying over a rational prime $\ell \neq p$, and suppose $\rho_{\lambda} \colon \pi_1(X) \to G(\ol{\QQ}_\lambda)$ is a $G(\ol{\QQ}_\lambda)$-local system.
We say that $\mc{E}$ and $\rho_{\lambda}$ are \emph{weak companions} if for all closed points $x \in X$, the semisimple conjugacy classes of $\rho_{\lambda}(\Frob_x)$ and $\rho_{\mc{E}}(\Frob_x)$ (in $[G\git G](\ol{\QQ}_\lambda)$ and $[G\git G](\ol{\QQ}_\pi)$, respectively) are in fact defined over $\ol{\QQ}$ and are equal. 
\end{defn}

\begin{rmk}
The condition that $\rho_{\lambda}$ and $\mc{E}$ are weak companions could equivalently be stated as saying that for each $x\in X(\kappa')$ with $\kappa'$ a finite extension of $\kappa$ and each $\xi\in\Rep(G)$, the characteristic polynomials of $(\xi\circ\rho_\lambda)(\Frob_x)$ and of the linearized Frobenius action on the fiber $x^*\mc{E}(\xi)$ both have coefficients in $\ol{\QQ}$ and are equal.
\end{rmk}

\begin{rmk}\label{rmk:uniqueness-weak-companions}
Weak $\lambda$-adic companions of an overconvergent $G$-$F$-isocrystal on $X$ need not be unique (particularly when the monodromy group is not connected). A better notion of companion, which ensures uniqueness, is given in \cref{rmk:strong-companions}. However, for our main application (\cref{thm:companions}), the two notions agree: if $G$ is a connected semisimple group over $\ol{\QQ}$, and $\mc{E}$ is a $G$-$F$-overconvergent isocrystal with $\ol{\QQ}_\pi$-coefficients such that the monodromy representation $\rho_{\pi} \colon \pi_1^{\FIsocdag}(X) \to G_{\ol{\QQ}_\pi}$ is surjective (i.e.\ faithfully flat), then $\lambda$-adic companions are unique. Indeed, the strong $\lambda$-adic companion $\rho_{\pi \leadsto \lambda}$ of $\rho_{\pi}$ from \cref{rmk:strong-companions} has Zariski-dense image, and since $G$ is connected, \cite[Proposition 6.4]{bhkt:fnfieldpotaut} implies that any other weak $\lambda$-adic companion of $\rho_\pi$ is $G(\ol{\QQ}_\lambda)$-conjugate to $\rho_{\pi \leadsto \lambda}$. 
\end{rmk}

The following theorem is a combination of work of Drinfeld, Kedlaya, Abe, and Abe--Esnault. 

\begin{thm}
\label{thm:companions}
Let $G$ be an algebraic group over $\ol{\QQ}$ with semisimple neutral component and $\mc{E}$ an overconvergent $G$-$F$-isocrystal on $X$ with coefficients in $\ol{\QQ}_\pi$ such that the monodromy representation $\rho_{\pi} \colon \pi_1^{\FIsocdag}(X) \to G_{\ol{\QQ}_\lambda}$ of $\mc{E}$ is surjective.\footnote{equivalently, that the functor $\mc{E}$ is fully faithful and its essential image is closed under taking subobjects \cite[Proposition 2.21]{deligne-milne}}
Then $\mc{E}$ admits a weak $\lambda$-companion $\rho_{\pi \leadsto \lambda} \colon \pi_1(X)\to G(\ol{\QQ}_\lambda)$ for any finite place $\lambda$ of $\ol{\QQ}$ not above $p$.
Moreover:
\begin{enumerate}
\item
The image of $\rho_{\pi \leadsto \lambda}$ is Zariski-dense in $G_{\ol{\QQ}_\lambda}$.
\item
If $\mc{E}$ takes values in the subcategory of tame overconvergent $F$-isocrystals, then $\rho_{\pi \leadsto \lambda}$ factors through the tame fundamental group of $X$.
\end{enumerate}
\end{thm}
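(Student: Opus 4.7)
The plan is to invoke the companions theorem for semisimple overconvergent $F$-isocrystals (Abe, Abe--Esnault) together with its Tannakian-categorical enhancement due to Drinfeld and Kedlaya, and then repackage the output through the Tannakian formalism into a $G(\ol{\QQ}_\lambda)$-local system.

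First I would check that $\mc{E}$ lands in the subcategory of semisimple overconvergent $F$-isocrystals. Since $G$ has semisimple neutral component and $\rho_\pi$ is surjective, for every $\xi\in\Rep(G)$ the Tannakian monodromy group of $\mc{E}(\xi)$ is the schematic image of $G$ under $\xi$, which again has semisimple (hence reductive) neutral component. Because we are in characteristic zero, the Tannakian subcategory generated by $\mc{E}(\xi)$ is then semisimple; in particular, $\mc{E}(\xi)$ itself is semisimple in $\FIsocdag(X)_{\ol{\QQ}_\pi}$.

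Next I would invoke the results of Drinfeld \cite{drinfeld:pross}, as reinterpreted and extended by Kedlaya \cite{kedlaya:etale-crystalline-1, kedlaya:etale-crystalline-2}, which upgrade the pointwise companions theorem of Abe and Abe--Esnault to an equivalence of suitable pro-semisimple Tannakian completions. After fixing an abstract $\ol\QQ$-algebra isomorphism $\ol{\QQ}_\pi\cong\ol{\QQ}_\lambda$ compatible with fixed embeddings of $\ol\QQ$, one obtains a tensor functor from semisimple overconvergent $F$-isocrystals on $X$ with $\ol{\QQ}_\pi$-coefficients to semisimple lisse $\ol{\QQ}_\lambda$-sheaves on $X$, compatible with Frobenius characteristic polynomials at every closed point. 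Composing with $\mc{E}$ yields an exact $\ol{\QQ}_\lambda$-linear tensor functor $\Rep(G)\to\Rep_{\ol{\QQ}_\lambda}(\pi_1(X))$; by Tannakian duality \cite[Theorem 3.2]{deligne-milne}, this corresponds (up to $G(\ol{\QQ}_\lambda)$-conjugacy) to a continuous homomorphism $\rho_{\pi\leadsto\lambda}\colon\pi_1(X)\to G(\ol{\QQ}_\lambda)$, and the Frobenius compatibility translates directly into the weak-companion property of \cref{defn:weak-companions}.

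For (1), I would use the Tannakian criterion (Deligne--Milne, Proposition 2.21): the image of $\rho_{\pi\leadsto\lambda}$ is Zariski-dense in $G_{\ol{\QQ}_\lambda}$ iff the associated tensor functor $\Rep(G)\to\Rep_{\ol{\QQ}_\lambda}(\pi_1(X))$ is fully faithful and reflects subobjects, both of which properties pass from the analogous properties of $\mc{E}$ (valid because $\rho_\pi$ is surjective) through the fully faithful Drinfeld--Kedlaya companion functor. For (2), I would invoke the preservation of tameness by Kedlaya's companion construction, so that the factorization of $\mc{E}$ through the tame subcategory forces $\rho_{\pi\leadsto\lambda}$ to factor through the tame fundamental group. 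The real substance of the theorem lies not in this bookkeeping but in the Tannakian enhancement of the pointwise companions theorem: individual pointwise companions are defined only up to non-canonical choices (compare \cref{rmk:uniqueness-weak-companions}), and the fact that they can nonetheless be organized coherently into a tensor functor is precisely the content of Drinfeld's and Kedlaya's work.
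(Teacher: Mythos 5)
Your construction of $\rho_{\pi\leadsto\lambda}$ is, in essence, the Tannakian dual of what the paper does: composing $\mc{E}$ with a Frobenius-compatible tensor equivalence between the semisimple coefficient categories is the same thing as composing the surjection $\wh{\Pi}_{(\pi)}\twoheadrightarrow G$ with an isomorphism $\wh{\Pi}_{(\lambda)}\xrightarrow{\sim}\wh{\Pi}_{(\pi)}$, which is exactly how the strong companion is defined in \cref{rmk:strong-companions}. Your preliminary observation that $\mc{E}$ lands in semisimple objects is fine (surjectivity of $\rho_\pi$ onto a group with semisimple neutral component, plus characteristic zero), and the deductions of (1) via the Tannakian density criterion and of (2) via preservation of tameness (the paper cites \cite[Corollary 3.15]{kedlaya:etale-crystalline-1}) are correct modulo the main input.

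The gap is precisely in the step you yourself flag as carrying ``the real substance.'' You cite the equivalence of pro-semisimple Tannakian completions --- equivalently, a tensor functor from semisimple overconvergent $F$-isocrystals with $\ol{\QQ}_\pi$-coefficients to semisimple lisse $\ol{\QQ}_\lambda$-sheaves matching Frobenius characteristic polynomials at all closed points --- as a result of ``Drinfeld, as reinterpreted and extended by Kedlaya.'' No such statement exists off the shelf in the generality needed here: Drinfeld's Theorem 7.5.1 involving a place above $p$ is only available when $\dim(X)=1$ (where Abe's theory supplies the crystalline side), while Kedlaya's papers \cite{kedlaya:etale-crystalline-1, kedlaya:etale-crystalline-2} prove existence of companions \emph{object by object}, i.e.\ pointwise Frobenius compatibility, not the group-scheme-level isomorphism $\wh{\Pi}_{(\lambda)}\cong\wh{\Pi}_{(\pi)}$ compatible with the diagram \eqref{eq:drinfeld-diagram}. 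Upgrading Kedlaya's pointwise statement to the Tannakian one in arbitrary dimension is exactly what the paper proves as \cref{thm:drinfeld-751}: one reruns Drinfeld's argument from \cite[\S5]{drinfeld:pross} with $\pi$ in place of $\lambda'$, feeding in \emph{both} directions of the crystalline--\'etale correspondence, verifying that $\wh{\Pi}_{(\pi)}^\circ$ is a product of almost-simple groups (via \cite[Theorem 4.3.7(iii), eq.\ (5.3)]{drinfeld:pross}), and routing around the fact that \cite[Lemma 5.5.8]{drinfeld:pross} is only proved on the \'etale side. As written, your proposal assumes this main point rather than proving it, so it does not establish the theorem for the higher-dimensional $X$ (mod-$p$ fibers of Shimura varieties) to which it is applied.
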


\subsection{Proof of \cref{thm:companions}}

If $\dim(X) = 1$, statement (1) follows immediately from \cite[Theorem 7.5.1]{drinfeld:pross}. We will explain why the conclusion of \cite[Theorem 7.5.1]{drinfeld:pross} remains true when $\dim(X) > 1$.
(See \cite[\S4.2]{drinfelds-lemma} for a parallel discussion.)

Recall (\cite[\S\S1.1--1.2 and 7.1--7.3]{drinfeld:pross}) that Drinfeld constructs affine pro-algebraic groups $\wh{\Pi}_{(\lambda)}$ and $\wh{\Pi}_{(\pi)}$ over $\ol{\QQ}$. By construction and \cite[Proposition 2.3.3]{drinfeld:pross}, these have following property:
If $G$ is a linear algebraic group over $\ol{\QQ}$ with semisimple neutral component, there exist ``canonical'' bijections between
\begin{itemize}
\item
$G^\circ(\ol{\QQ})$-conjugacy classes of surjective homomorphisms $r_\lambda \colon \wh\Pi_{(\lambda)} \twoheadrightarrow G$ and $G^\circ(\ol{\QQ}_\lambda)$-conjugacy classes of continuous homomorphisms $\rho_\lambda \colon \pi_1(X)\to G(\ol{\QQ}_\lambda)$ with Zariski-dense image.
\item
$G^\circ(\ol{\QQ})$-conjugacy classes of epimorphisms $r_\pi\colon \wh{\Pi}_{(\pi)} \twoheadrightarrow G$ and $G^\circ(\ol{\QQ}_{\ol{\pi}})$-conjugacy classes of surjective homomorphisms $\rho_\pi \colon \pi_1^{\FIsocdag}(X)\twoheadrightarrow G_{\ol{\QQ}_\pi}$.
\end{itemize}
Moreover, there exists a diagram (\cite[\S\S 1.3 and 7.3--7.4]{drinfeld:pross})
\begin{equation}
\label{eq:drinfeld-diagram}
\begin{tikzcd}
[row sep=0em]
&{[\wh{\Pi}_{(\lambda)}]}(\ol{\QQ})\arrow[rd]\\
\Pi_{\Frob}\arrow[ru]\arrow[rd]&&\pi_1(X)\\
&{[\wh{\Pi}_{(\pi)}]}(\ol{\QQ})\arrow[ru]
\end{tikzcd}
\end{equation}
in which $\Pi_{\Frob}\subseteq\pi_1(X)$ is the union of all Frobenius conjugacy classes $\Frob_x$ as $x$ ranges over all points of $X$ valued in finite fields, $\Pi_{\Frob} \to \pi_1(X)$ is the inclusion map, and $[H]$ denotes the GIT quotient $H\git H^\circ$ by the conjugation action.
This diagram has the following property:
For any $\gamma \in \Pi_{\Frob}$,
\begin{itemize}
\item
If $r_\lambda \colon \wh{\Pi}_{(\lambda)} \twoheadrightarrow G$ and $\rho_\lambda \colon \pi_1(X)\to G(\ol{\QQ}_\lambda)$ correspond under the bijection above, then $[\rho_\lambda(\gamma)]$ lives in $[G](\ol{\QQ})$ and equals $[r_\lambda(\gamma)]$.\footnote{
This property visibly does not depend on which $G^\circ$-conjugates of $r_\lambda$ and $\rho_\lambda$ we pick; likewise in the second bullet.
}
\item
If $r_\pi \colon \wh{\Pi}_{(\pi)} \twoheadrightarrow G$ and $\rho_\pi \colon \pi_1^{\FIsocdag}(X) \twoheadrightarrow G_{\ol{\QQ}_\pi}$ correspond under the bijection above, then $[\rho_\pi(\gamma)]$ lives in $[G](\ol{\QQ})$ and equals $[r_\pi(\gamma)]$.
\end{itemize}

Now we state \cite[Theorem 7.5.1]{drinfeld:pross} for $\dim(X)$ arbitrary.

\begin{thm}
\label{thm:drinfeld-751}
There exists an isomorphism $\wh{\Pi}_{(\lambda)}\xrightarrow{\sim}\wh{\Pi}_{(\pi)}$ of affine group schemes over $\ol{\QQ}$, unique up to conjugation by $\wh{\Pi}_{(\pi)}^\circ(\ol{\QQ})$, such that the induced map $[\wh{\Pi}_{(\lambda)}] \xrightarrow{\sim} [\wh{\Pi}_{(\pi)}]$ makes diagram \eqref{eq:drinfeld-diagram} commute.
\end{thm}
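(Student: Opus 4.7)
The plan is to mirror Drinfeld's dimension-1 proof, substituting the higher-dimensional companions theorems for the use of L.~Lafforgue's theorem in \cite{drinfeld:pross}. First, observe that the construction of $\wh\Pi_{(\lambda)}$, $\wh\Pi_{(\pi)}$, and diagram \eqref{eq:drinfeld-diagram} in \cite[\S\S 1--2, 7.1--7.3]{drinfeld:pross} is purely Tannakian: it requires a rigid abelian tensor category of coefficient objects, a fiber functor given by a geometric base point, and the distinguished family of Frobenius conjugacy classes. All these ingredients are available on any smooth geometrically connected $X$ over $\kappa$ regardless of dimension, so the groups and the diagram exist in the setting of interest.

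The essential input beyond dimension 1 is companions existence in higher dimensions: by Abe--Esnault for the $\ell$-to-$p$ direction and Kedlaya \cite{kedlaya:etale-crystalline-1,kedlaya:etale-crystalline-2} for the $p$-to-$\ell$ direction, every semisimple $\ol\QQ_\lambda$-local system on $X$ admits a companion semisimple overconvergent $F$-isocrystal with $\ol\QQ_\pi$-coefficients (matching Frobenius characteristic polynomials at all closed points), and conversely. These constructions reduce to curves via Bertini-type arguments and invoke the dimension-1 companions theorems of Drinfeld--L.~Lafforgue and Abe. Combining this with the rigidity of Zariski-dense $G$-representations up to $G^\circ$-conjugation \cite[Proposition 6.4]{bhkt:fnfieldpotaut} yields, via Tannakian duality, an isomorphism $\wh\Pi_{(\lambda)} \xrightarrow{\sim} \wh\Pi_{(\pi)}$ of pro-algebraic $\ol\QQ$-group schemes.

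The commutativity of diagram \eqref{eq:drinfeld-diagram} under the induced map on the GIT quotients $[\,\cdot\,]$ is automatic from the defining Frobenius-matching property of companions: for every $\gamma = \Frob_x \in \Pi_{\Frob}$ one has $[r_\lambda(\gamma)] = [r_\pi(\gamma)] \in [G](\ol\QQ)$, which is precisely the required compatibility. Uniqueness up to $\wh\Pi_{(\pi)}^\circ(\ol\QQ)$-conjugation then follows from the density of $\Pi_{\Frob}$ in $\pi_1(X)$ (hence in the pro-algebraic groups under consideration, by a Chebotarev-type argument), together with the rigidity of semisimple $G$-representations recalled above. The most delicate step, and the main obstacle I anticipate, is verifying that the companion correspondence respects the full Tannakian structure (tensor products, internal Homs, direct sums) rather than just individual coefficient objects; this should follow from the uniqueness of semisimple companions combined with functoriality of the constructions under pullback to curves, but requires careful bookkeeping to translate the object-level statements in \cite{kedlaya:etale-crystalline-2} and its $\ell$-to-$p$ counterpart into the categorical equivalence demanded by Drinfeld's Tannakian machinery.
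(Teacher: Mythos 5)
Your overall strategy---rerun Drinfeld's argument with the higher-dimensional crystalline companions as the new input---is the same as the paper's, which simply repeats \cite[\S5]{drinfeld:pross} with $\pi$ in place of $\lambda'$. But your middle step hides a genuine gap: you assert that companions existence together with the conjugation-rigidity of Zariski-dense homomorphisms \cite[Proposition 6.4]{bhkt:fnfieldpotaut} ``yields, via Tannakian duality, an isomorphism $\wh{\Pi}_{(\lambda)}\xrightarrow{\sim}\wh{\Pi}_{(\pi)}$.'' That is precisely the content of Drinfeld's \S5 and is not a formal consequence of those two inputs: companions match coefficient objects (equivalently, conjugacy classes of surjections onto each individual target group) one at a time, and splicing these matchings into a single isomorphism of pro-algebraic groups, unique up to conjugation by $\wh{\Pi}_{(\pi)}^\circ(\ol{\QQ})$ and compatible with \eqref{eq:drinfeld-diagram}, requires Drinfeld's structure theory of the pro-semisimple completions. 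In particular, Drinfeld's argument after \cite[Theorem 5.2.1, Remark 5.5.4]{drinfeld:pross} needs both $\wh{\Pi}_{(\lambda)}^\circ$ and $\wh{\Pi}_{(\pi)}^\circ$ to be products of almost-simple groups; for $\wh{\Pi}_{(\pi)}^\circ$ this is not automatic and is supplied by \cite[Theorem 4.3.7, (iii)]{drinfeld:pross} together with the isomorphism \cite[equation (5.3)]{drinfeld:pross}. One must also observe that the proof of \cite[Lemma 5.5.8]{drinfeld:pross} applies only to $\wh{\Pi}_{(\lambda)}$, which is harmless for \cite[\S5.5.9]{drinfeld:pross} because of \cite[Lemma 5.5.6, (i)$\implies$(ii)]{drinfeld:pross}. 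None of these points appear in your sketch, and they are exactly what needs verifying when $\lambda'$ is replaced by $\pi$.

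Two smaller issues. The obstacle you single out at the end---upgrading the companion correspondence to a tensor-compatible equivalence of the two categories of coefficient objects---is not how the argument goes and would likely be a dead end: Drinfeld neither constructs nor needs such a tensor functor; the isomorphism of group schemes is assembled from the matching of almost-simple factors and component-group data, and it is unique only up to conjugation, as the statement of \cref{thm:drinfeld-751} reflects. Also, you have the two directions of the correspondence swapped: Abe--Esnault and \cite{kedlaya:etale-crystalline-1} give the crystalline-to-\'etale ($p\leadsto\ell$) direction, while the \'etale-to-crystalline direction in higher dimension is \cite{kedlaya:etale-crystalline-2}; the proof needs both.
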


\begin{rmk}\label{rmk:strong-companions}
The above setup allows us to make precise the notion of a strong companion of an overconvergent $F$-isocrystal with semisimple (not necessarily connected!)\ monodromy group:
Let $G$ be an affine algebraic group over $\ol{\QQ}$ such that $G^\circ$ is semisimple, and $\mc{E}$ an overconvergent $G$-$F$-isocrystal with $\ol{\QQ}_\pi$-coefficients such that the monodromy representation $\rho_{\pi} \colon \pi_1^{\FIsocdag}(X) \to G_{\ol{\QQ}_\pi}$ is surjective.
This is equivalent to the data of a surjection $\hat{\Pi}_{(\pi)} \twoheadrightarrow G$ of algebraic groups over $\ol{\QQ}$ (unique up to $G^\circ(\ol{\QQ})$-conjugacy). The composition 
    \[ \wh{\Pi}_{(\lambda)}\xrightarrow{\sim}\wh{\Pi}_{(\pi)} \twoheadrightarrow G \]
then corresponds to a continuous homomorphism $\rho_{\pi \leadsto \lambda} \colon \pi_1(X) \to G(\ol{\QQ}_\lambda)$, which we call the \emph{strong} $\lambda$-adic companion of $\mc{E}$ (or $\rho_\pi$). This is unique up to $G(\ol{\QQ}_\lambda)$-conjugation, and the properties above ensure that it is a weak $\lambda$-adic companion of $\mc{E}$ in the sense of \cref{defn:weak-companions}.
\end{rmk}

\begin{proof}[Proof of \cref{thm:drinfeld-751}]
The argument is the same as that of \cite[\S5]{drinfeld:pross}, writing $\pi$ in place of $\lambda'$; the following comments should alleviate any concerns that might arise in making this replacement.
\begin{itemize}
\item
By (both directions of) the crystalline-to-\'etale companions correspondence \cite{kedlaya:etale-crystalline-1}, \cite{kedlaya:etale-crystalline-2}, the discussion of \cite[\S\S5.1--5.2 up to the statement of Theorem 5.2.1]{drinfeld:pross} holds verbatim with $\pi$ in place of $\lambda'$.
\item
After the statements of \cite[Theorem 5.2.1 and Remark 5.5.4]{drinfeld:pross}, Drinfeld's arguments require that $\wh{\Pi}_{(\pi)}^\circ$ and $\wh{\Pi}_{(\lambda)}^\circ$ be products of almost simple groups.
For $\wh{\Pi}_{(\lambda)}^\circ$, this follows from \cite[Proposition 3.3.4]{drinfeld:pross} (which in fact proves that $\wh{\Pi}_{(\lambda)}^\circ$ is simply connected).
The corresponding result for $\wh{\Pi}_{(\pi)}^\circ$ follows from \cite[Theorem 4.3.7, (iii)]{drinfeld:pross} plus the isomorphism \cite[equation (5.3)]{drinfeld:pross}.
\item
The proof of \cite[Lemma 5.5.8]{drinfeld:pross} only applies to $\wh{\Pi}_{(\lambda)}$.
But this is immaterial for the argument of \cite[\S5.5.9]{drinfeld:pross} due to \cite[Lemma 5.5.6, (i)$\implies$(ii)]{drinfeld:pross}.\qedhere
\end{itemize}
\end{proof}

Finally, we explain how to deduce Theorem \ref{thm:companions}.
Let $\mc{E}$ be an overconvergent $G$-$F$-isocrystal on $X$ with surjective monodromy representation $\rho_{\pi} \coloneqq \rho_{\mc{E}} \colon \pi_1^{\FIsocdag}(X) \to G_{\ol{\QQ}_\pi}$. Let $\rho_{\pi \leadsto \lambda}$ be the strong companion of \cref{rmk:strong-companions}. By construction, property (1) of Theorem \ref{thm:companions} holds, and (2) follows from \cite[Corollary 3.15]{kedlaya:etale-crystalline-1}. 

\begin{rmk}
That \cite[Theorem 7.5.1]{drinfeld:pross} holds when $\dim(X)>1$ also answers \cite[Question 7.5.3]{drinfeld:pross}: $\wh{\Pi}_{(\pi)}^\circ$ and, therefore, $\pi_1^{\FIsocdag}(X)^\circ$ are simply connected.
\end{rmk}


\section{Statement of main theorem}\label{s.main-thm-statement}
\subsection{Assumptions on the Shimura datum}
Let $(G,X)$ be a Shimura datum such that $Z_G(\QQ)$ is discrete in $Z_G(\mbb{A}_f)$.\footnote{
This implies, as noted in the introduction, that $G=G^c$ in the notation of \cite[\S5]{diao-lan-liu-zhu:log-riemann-hilbert} (see \cite[Theorem 5.26]{milne:shimuraintro}).
}
We let $\Sh = \Sh(G,X)$ be the Shimura variety associated to $(G,X)$, defined over the reflex field $E(G,X) \subseteq \CC$. By assumption on $(G,X)$ we have $\Sh(\CC) = G(\QQ)\backslash X \times G(\mbb{A}_f)$, and if $x \in X, g \in G(\mbb{A}_f)$ we write $[x,g]$ for the image of $(x,g)$ in $\Sh(\CC)$. Let $s = [x,g] \in \Sh(\CC)$. For any neat compact open subgroup $K \subseteq G(\mbb{A}_f)$, we let $\Sh_K = \Sh_K(G,X)$ be the Shimura variety of level $K$, also defined over $E(G,X)$.
We let $sK \in \Sh_K(\CC)$ be the image of $s$ under $\Sh(\CC) \to \Sh_K(\CC)$, and we let $S_{K,s,\ol\QQ}$ be the connected component of $S_{K,\ol\QQ}$ that contains $sK$.
This component defined over a number field $E_{K,s}$, namely the fixed field of the stabilizer in $\Gal(\ol{\QQ}/E(G,X))$ of the connected component of $\pi_0(\Sh_{K, \ol{\QQ}})$ containing $s$.
We write $S_{K,s}$ for the descent of $S_{K,s,\ol\QQ}$ to $E_{K,s}$; it is a geometrically connected component of $\Sh_{K,E_{K,s}}$.
If $X^+$ is the connected component of $X$ containing $x$, then $\Sh_{K,s}(\CC) \cong \Gamma_{K,s} \backslash X^+$ for $\Gamma_{K,s} = G(\QQ)_+ \cap gKg^{-1}$ (where $G(\QQ)_+$ is the set of elements of $G(\QQ)$ mapping to the identity component of $G^\ad(\RR)$ in the analytic topology).

Let $\Sh^{\mr{tor}}_K(G,X)_\CC$ be a toroidal compactification of $\Sh_K(G,X)_\CC$ \cite{ash-mumford-rapoport-tai:toroidal} (see also \cite{harris-compactification} or \cite{pink-compactification}). 
We may choose the compactification so that the following hold:

\begin{enumerate}
    \item
    It has a canonical model $\Sh^\tor_K(G,X)$ over $E(G,X)$ \cite[Proposition 2.8]{harris-compactification}. 
    \item
    The toroidal compactification $\Sh^\tor_K(G,X)$ is a \textit{good compactification} in the sense that it is smooth and proper over $E(G,X)$ (even projective), and the boundary divisor has strict normal crossings \cite[Remark 2.5.7, (a)]{harris-compactification}. 
\end{enumerate}
We let $\bar{S}_{K,s}$ be the closure of $S_{K,s}$ in $\Sh^\tor_{K, E_{K,s}}(G,X)$, which is a good compactification of $S_{K,s}$. Moreover $\bar{S}_{K,s}$ is geometrically connected because it contains the geometrically connected dense open subvariety $S_{K,s}$. 

Finally, we fix once and for all a neat compact open subgroup $K_0 = \prod_{\ell} K_{0,\ell} \subseteq G(\mbb{A}_f)$, a point $s = [x,g]$ such that $sK_0 \in \Sh_{K_0}(E)$, for a finite extension $E$ of $E_{K_0, s}$, and write $\bar{s} \in \Sh_{K_0}(\CC)$ for the geometric point lying over $sK_0$. We then let $S = S_{K_0, s}, \bar{S} = \bar{S}_{K_0,s}$ considered as (geometrically connected) varieties over $E$, and $\Gamma = gK_0g^{-1} \cap G(\QQ)_+$ so that $\Gamma = \pi_1^\top(S_\CC, \bar{s})$.\footnote{A priori, $\pi_1^\top(S_\CC, \bar{s})$ is identified with the image $\Gamma^\ad$ of $\Gamma$ under the quotient $G(\QQ) \to G^\ad(\QQ)$. But $\Gamma \cong \Gamma^\ad$: the kernel $Z_G(\QQ) \cap gK_0g^{-1}$ is a finite (hence trivial as $gK_0g^{-1}$ is neat) subgroup since $Z_G(\QQ) \subseteq Z_G(\mbb{A}_f)$ is discrete and $gK_0g^{-1}$ compact.}


\begin{assumption}\label{assumption:superrigid}
Every $\QQ$-simple factor of $G^\ad$ has real rank $\geq 2$.
\end{assumption}

This assumption is key to later arguments, as it implies that $\Gamma=\pi_1^\mr{top}(S(\CC), \bar{s})$ is $G$-superrigid \cite[Definition 2.2, Proposition 3.3]{patrikis-klevdal:shimura}). This means that if $\Omega$ is an algebraically closed field of characteristic 0 and $\rho_1, \rho_2 \colon \Gamma \to G^\ad(\Omega)$
are homomorphisms with Zariski-dense image, there exists $\tau \in \Aut(G_\Omega^\ad)$ such that $\rho_1 = \tau \circ \rho_2$ (and, additionally, that $\Hom(\Gamma, G/G^\mr{der}(\Omega))$ is finite).

\subsection{Canonical local systems}\label{ss.canonical-local-systems}
We give the description of the canonical $G(\QQ_\ell)$-local systems on our geometrically connected Shimura variety $S$. There is a canonical homomorphism 
    \[ \rho_{K_0,s} \colon \pi_1(S, \bar{s}) \to K_0 = \varprojlim_{K \leq K_0} K_0/K \]
where the limit is taken over normal compact open subgroups $K$ of $K_0$ and the the projection $\pi_1(S, \bar{s}) \to K_0/K$ is given by the Galois cover $\widetilde{\Sh}_{K, s}/S$ with $\widetilde{\Sh}_{K,s}$ the connected component of $\Sh_K \otimes_{E(G,X)} E$ containing $sK$ (see \cite[\S3.1]{patrikis-klevdal:shimura}). For each prime $\ell$, we have the projection
    \[ \rho_\ell \coloneqq \rho_{K_0, s, \ell} \colon \pi_1(S, \bar{s}) \to K_0 \to K_{0,\ell} \subset G(\QQ_\ell), \]
and their adjoint quotients
    \[ \rho_\ell^\ad \coloneqq \rho_{K_0, s, \ell}^\ad \colon \pi_1(S, \bar{s}) \to G^\ad(\QQ_\ell). \]

\subsection{Canonical flat $G$-bundles}\label{ss.canonical-g-bundles}
We give the description of the canonical flat $G$-bundles on $S$. Let $(\xi, V) \in \Rep(G)$. The representation 
    \[ \pi_1^\top(S_\CC, \bar{s}) = \Gamma \subseteq G(\QQ) \xrightarrow{\xi} \GL(V) \]
gives rise to a $\QQ$-local system $V(\xi)$ on $S_\CC^\an$. We have an associated analytic flat vector bundle
    \[ \mc{V}(\xi)^\an = V(\xi) \otimes_\QQ \mc{O}_{S_\CC^\an}, \qquad \nabla = 1 \otimes d \]
on $S_\CC^\an$ such that $\mc{V}(\xi)^{\an, \nabla = 0} = V(\xi) \otimes_\QQ \CC$. By \cite[Theorem 4.2 and its proof]{harris-compactification}, there is a canonical descent and extension of $(\mc{V}(\xi)^\an, {\nabla})$ to an object
    \[ (\bar{\mc{V}}(\xi), \bar\nabla) \in \MIC^\nilp(\bar{S}, S) \]
such that the analytification of $\bar{\mc{V}}(\xi)|_{S_\CC}$ is $\mc{V}(\xi)^\an$, and same for the connection (the fact that the residues are nilpotent is discussed in \cite[\S 6.1]{lan-suh:vanishing-general-PEL}). 

The construction is functorial in $\Rep(G)$, and induces an exact $\QQ$-linear tensor functor 
    \[ \bar{\mc{V}} \colon \Rep(G) \to \MIC^\nilp(\bar{S}, S). \]
If $v$ is a finite place of $E$ lying over the rational prime $p$, then there is a unique $\QQ_p$-linear extension of the above tensor functor
    \[ \bar{\mc{V}}_{E_v} \colon \Rep(G_{\QQ_p}) \to \MIC^\nilp(\bar{S}_{E_v}, S_{E_v}). \]
One way to construct $\bar{\mc V}_{E_v}$ is to use the equivalence between tensor functors and torsors (see e.g.\ \cite[Theorem 4.8]{broshi}): view $\bar{\mc V}$ as a $G$-torsor $\bar P\to\bar S$, and then view the $G$-torsor $\bar P_{E_v}\to\bar S_{E_v}$ as a tensor functor.
Note that $\bar{P}$ is the canonical model of the standard principal bundle in the terminology of \cite[Theorem 4.3]{milne:canonical-models}.

\subsection{$p$-adic compatibilities}\label{ss.p-adic-compatibilities}
We record now the compatibility of the local system $\xi \rho_{p,v}$ with $\mc{V}_{E_v}(\xi)$ for $\xi \in \Rep(G_{\QQ_p})$.  Let $D_\dR^\alg$ be the functor constructed in \cite{diao-lan-liu-zhu:log-riemann-hilbert}, from de~Rham $\QQ_p$-local systems on $S_{E_v}$ to the category of (algebraic) vector bundles on $S_{E_v}$ with a regular integrable connection and a Griffiths-transverse filtration. By the second paragraph of \cite[Theorem 5.3.1]{diao-lan-liu-zhu:log-riemann-hilbert},\footnote{Another application of superrigidity!} there is an isomorphism
\begin{equation}\label{eqn:DLLZ-compatibility}
    D_\dR^\alg(\xi \rho_{p,v}) \cong \mc{V}_{E_v}(\xi)
\end{equation}  
respecting the connections and the filtrations.

In fact, more is true, as explained in the third paragraph of \textit{loc.\ cit.}: $D_\dR^\alg(\xi\rho_{p,v})$ is the restriction to $S_{E_v}$ of a vector bundle $\bar{D}_\dR^\alg(\xi\rho_{p,v})$ on $\bar{S}_{E_v}$ with a connection having logarithmic singularities and nilpotent residues along $D_{E_v}$, and \textit{loc.\ cit.\ }constructs an isomorphism $\bar{D}_\dR^\alg(\xi\rho_{p,v}) \cong \bar{\mc{V}}_{E_v}(\xi)$ respecting connections and filtrations. (There is a unique isomorphism extending \cref{eqn:DLLZ-compatibility} since both logarithmic connections have nilpotent residues.) 

As a consequence, the analytification of the tensor functor $\mc{V}_{E_v}$ agrees with $D_\dR \circ \rho_{p, v}^{\ast, \an}$, where $D_\dR$ is the functor of \cite{liu-zhu:riemann-hilbert}. 

\subsection{Integral models}
We now construct our integral models following \cite[\S 8]{pila-shankar-tsimerman:andre-oort}. The toroidal compactification $\bar{S}$ is log smooth over $E$, and \emph{loc.\ cit.\ }shows that $(\bar{S}, S, D, s)$ spreads out to give $(\bar{\ms{S}}, \ms{S}, \ms{D}, s)$ over $\mc{O}_{E}[1/N]$ such that $\bar{\ms{S}}$ is proper over $\mc{O}_E[1/N]$, each of $\bar{\ms{S}}, \ms{S}$ the irreducible components of $\ms{D}$ are smooth over $\mc{O}_E[1/N]$ and moreover it is shown in \cite{pila-shankar-tsimerman:andre-oort} that the integral model can be chosen so that the following points hold:

\begin{assumption}\label{assumption:pst-int-model}
\, 
\begin{enumerate}
    \item The base change of $(\ms{S}, s)$ to $E$ is equal to $(S,s)$; 
    \item Any special point $y \in S(\bar{E})$ extends to an integral point $y \in \ms{S}(\mc{O}_{\bar{E}}[1/N])$;
    \item There is an embedding $\ms{S} \subseteq \bar{\ms{S}}$ of schemes over $\mc{O}_E[1/N]$ such that $\bar{\ms{S}}$ is smooth and proper over $\mc{O}_E[1/N]$ and $\ms{D} = \bar{\ms{S}} \setminus \ms{S}$ is a relative strict normal crossing divisor over $\mc{O}_E[1/N]$;
\end{enumerate}
\end{assumption}


\begin{lem}\label{lem:integral-model}
Fix a faithful representation $\xi_0 \in \Rep(G)$. After possibly replacing $N$ by some multiple, the integral model $(\bar{\ms{S}}, \ms{S}, \ms{D}, s)$ as above satisfies the following properties:
\begin{enumerate}
    \item For all primes $\ell$, $\rho_{K_0, s, \ell}^\ad$ factors through $\pi_1(\ms{S}[1/\ell], \bar{s})$; 

    \item The logarithmic flat vector bundle $\bar{\mc{V}}(\xi_0)\in\MIC(\bar{S}, D)$ (\cref{ss.canonical-g-bundles}) extends to a logarithmic flat vector bundle on $(\bar{\ms{S}}, \ms{D})$;
    
    \item For any finite prime $v$ of $\mc{O}_E[1/N]$ lying over a rational prime $p$ and any primes $\ell, \ell' \neq p$, the restrictions of $\rho_{K_0, s, \ell}^\ad$ and $\rho_{K_0, s \ell'}^\ad$ to $\ms{S}_v$ (the fiber of $\ms{S}$ over $v$) are companions. 
\end{enumerate}
\end{lem}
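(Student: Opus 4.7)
The plan is to start from the integral model $(\bar{\ms S}, \ms S, \ms D, s)$ already constructed over $\mc O_E[1/N]$ via \cite{pila-shankar-tsimerman:andre-oort}, and to enlarge $N$ in a finite number of stages to arrange each of (1)--(3) in turn. Each stage removes only finitely many primes from the base, so the enlargements can be combined into a single final $N$.

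For (1), I would appeal directly to \cite{patrikis-klevdal:shimura}, which, as recalled in the introduction, produces an integer $N$ (uniform in $\ell$) after which each $\rho_{K_0,s,\ell}^\ad$ extends uniquely to $\pi_1(\ms S[1/\ell],\bar s)$. The key inputs there are the existence of suitable integral models of $\Sh_K(G,X)$ at hyperspecial level away from $N$, together with the fact that $\rho_{K_0,s,\ell}^\ad$ is determined by a tower of finite étale covers of $S$ which themselves spread out to finite étale covers of $\ms S[1/\ell]$ after enlarging $N$. Enlarging $N$ further if necessary, we may combine this with our chosen toroidal compactification.

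For (2), I would use standard spreading-out. The pair $(\bar{\mc V}(\xi_0),\nabla)$ is a coherent $\mc O_{\bar S}$-module together with an integrable logarithmic connection $\nabla\colon\bar{\mc V}(\xi_0)\to\Omega^1_{\bar S/E}(\log D)\otimes\bar{\mc V}(\xi_0)$ having nilpotent residues. After inverting finitely many more primes, the underlying coherent sheaf spreads to a locally free $\mc O_{\bar{\ms S}}$-module, and the integrable logarithmic connection together with the nilpotence of residues (each being a closed condition that holds on the generic fiber) extends as well.

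For (3), this is the substantive content and is a direct reformulation of the main theorem of \cite{patrikis-klevdal:shimura}: after further enlarging $N$, for every closed point $x\in\ms S_v$ of every finite prime $v$ of residue characteristic $p$, every $\ell\neq p$, and every $\xi\in\Rep(G^\ad)$, the characteristic polynomial of $(\xi\circ\rho_\ell^\ad)(\Frob_x)$ has rational coefficients and is independent of $\ell$. This is precisely the statement that $\rho_\ell^\ad|_{\ms S_v}$ and $\rho_{\ell'}^\ad|_{\ms S_v}$ are companions. The main obstacle is (3): its proof in \cite{patrikis-klevdal:shimura} uses Margulis superrigidity (\cref{assumption:superrigid}) in an essential way and is the deepest input to the entire construction. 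Granted (3), the remaining task is simply to verify that a single $N$ supports all three properties simultaneously, which is automatic since each is preserved under further enlargement.
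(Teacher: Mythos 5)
Your proposal is correct and follows essentially the same route as the paper: (1) and (3) are quoted from \cite{patrikis-klevdal:shimura} (the paper cites Proposition 3.6(2) and the proof of Theorem 3.10 there, respectively), and (2) is handled by standard spreading out, with the three finite enlargements of $N$ combined at the end.
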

\begin{proof}
Part (2) is arranged by spreading out. Part (1) is proven in \cite[paragraph before Corollary 3.5]{patrikis-klevdal:shimura}.
Part (3) is \cite[Theorem 3.10]{patrikis-klevdal:shimura}. 
\end{proof}

\begin{rmk}
Since the monodromy groups of the canonical adjoint local systems are dense in the connected group $G^\ad$, the notion of strong and weak companions coincide. 
\end{rmk}

\subsection{Main theorem}
The main theorem of this paper extends the compatibility in \cref{lem:integral-model}-(3) to the case where one of $\ell, \ell'$ is equal to $p$.
First, we introduce some notation. For $v$ a finite place of $E$ with residue field $\kappa(v)$ of characteristic $p$, we write $\mc{O}_v$ for the completion of $\mc{O}_E$ at the maximal ideal $(v)$ and $E_v \coloneqq \mr{Frac}(\mc{O}_v) = \mc{O}_v[1/p]$. Let $S_{E_v}$ denote the base change of $S$ to $E_v$. If $p \nmid N$, let $\ms{S}_v$ be the fiber of $\ms{S}$ over $v$ (smooth over $\kappa(v)$), $\wh{\ms{S}}_v$ the $v$-adic completion of $\ms{S}$ (a smooth formal scheme over $\mr{Spf}(\mc{O}_v)$), $\wh{\ms{S}}_\eta$ the rigid generic fiber of $\wh{\ms{S}}_v$, and $S_{E_v}^\an$ the analytification of $S_{E_v}$.
Both $\wh{\ms{S}}_\eta$ and $S_{E_v}^\an$ are smooth rigid analytic varieties over $E_v$, and there is a natural open immersion $\wh{\ms{S}}_\eta \subseteq S_{E_v}^\an$, with equality if and only if $\ms{S}_{\mc{O}_v}$ is proper over $\mc{O}_v$. 

For each prime $\ell$, we write $\rho_{\ell}=\rho_{K_0, s, \ell}$ and 
\begin{equation}
\qquad  \rho_{\ell, v} = \rho_{K_0, s, \ell}|_{\ms{S}_v}, \qquad \rho_{p,v} = \rho_{K_0, s, p}|_{S_{E_v}}.
\end{equation}
(here $\ell \neq p$). We write $\rho_{p,v}^\ast \colon \Rep(G_{\QQ_p}) \to \Loc_{\QQ_p}(S_{E_v}), \xi \mapsto \xi \circ \rho_{p,v}$, an exact $\QQ_p$-linear tensor functor. There is a natural analytification functor $\Loc_{\QQ_p}(S_{E_v}) \to \Loc_{\wh{\QQ}_p}(S_{E_v}^\an)$ (the latter category from \cite[\S 8]{scholze:p-adic-ht}). Composing $\rho_{p,v}^\ast$ with analytification (and restriction) gives exact $\QQ_p$-linear tensor functors
\begin{align}
\rho_{p,v}^{\an,\ast} \colon \Rep(G_{\QQ_p}) &\to \Loc_{\wh{\QQ}_p}(S_{E_v}^\an)  && \xi \mapsto (\xi \rho_{p,v})^\an, \\
\rho_{p,v}^{\circ,\ast} \colon \Rep(G_{\QQ_p}) &\to \Loc_{\wh{\QQ}_p}(\wh{\ms{S}}_{\eta}) && \xi \mapsto (\xi \rho_{p,v})^\an|_{\wh{\ms{S}}_\eta}. 
\end{align}

Let $\mc{V}^\circ_{E_v}$ be the composition
\begin{equation}
\Rep(G_{\QQ_p}) \xrightarrow{\mc{V}_{E_v}} \MIC(S_{E_v}) \to \MIC(S_{E_v}^\an) \to \MIC(\wh{\ms{S}}_\eta).
\end{equation}
This is an exact $\QQ_p$-linear tensor functor, which we call the canonical $G_{\QQ_p}$-bundle on $\wh{\ms{S}}_\eta$. 

For any of the objects above, we use the superscript $(\cdot)^\ad$ to denote the corresponding `adjoint quotient' (given by either postcomposing with $G \to G^\ad$ or precomposing with $\Rep(G^\ad) \to \Rep(G)$).

\begin{thm}\label{thm:overconvergent-Dcrys}
Continue with the notation above. There exists an integer $N'$ (with $N\mid N'$) such that for all $p \nmid N'$ and $v \mid p$, the following hold: 

\begin{enumerate}
\item {There is an exact $\QQ_p$-linear tensor functor 
    \[ \mc{E}_v^{\log} \colon \Rep(G_{\QQ_p}) \to \FIsoc(\bar{\ms{S}}_v, \ms{D}_v) \]
whose composition with $\FIsoc(\bar{\ms{S}}_v, \ms{D}_v) \to \MIC(\wh{\ms{S}}_\eta)$ is isomorphic to the canonical $G_{\QQ_p}$-bundle $\mc{V}_{E_v}^\circ$ on $\wh{\ms{S}}_\eta$.}
\end{enumerate}
{Let $\mc{E}_v^\dagger$ and $\mc{E}_v$ be the composition of $\mc{E}_v^{\log}$ with the restriction to $\FIsocdag(\ms{S}_v)$ and $\FIsoc(\ms{S}_v)$ respectively. }
\begin{enumerate}[resume]
\item {The $G(\QQ_p)$-local system $\rho_{p,v}^\circ$ is crystalline in the sense of \cite{guo-reinecke:prismatic} and associated to $\mc{E}_v$: for each representation $\xi \in \Rep(G_{\QQ_p})$, there is an isomorphism of sheaves on the pro-\'etale site of $\wh{\ms{S}}_\eta$ 
\begin{equation}\label{eqn:crys-comp}
c_{\crys}(\xi) \colon (\rho_{p,v}^{\circ,\ast}\xi) \otimes_{\hat{\QQ}_p} \mbb{B}_\crys \cong \mc{E}_v(\xi)(\mbb{B}_\crys) 
\end{equation}
which respects Frobenius and filtrations. Moreover, $c_{\crys}(\xi)$ is functorial in $\xi$ and compatible with tensor products.}

\item Let $\pi$ be a place of $\ol{\QQ}$ lying over $p$ and $\rho_{\pi, v}^\ad \colon \pi_1^{\FIsocdag}(\ms{S}_v) \to G^\ad_{\ol{\QQ}_\pi}$ the monodoromy representation of $\mc{E}_v^{\dagger, \ad} \otimes_{\QQ_p} {\ol{\QQ}_\pi}$. Then $\rho_{\pi, v}$ is surjective. 
\end{enumerate}
\end{thm}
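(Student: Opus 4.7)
The plan is to construct $\mc{E}_v^\dagger$ in two stages: first obtain a convergent $G$-$F$-isocrystal on $\ms{S}_v$ from the crystallinity of the canonical $p$-adic local system, then upgrade to overconvergence via a logarithmic extension on the toroidal compactification and a Dwork-type argument. For Stage 1, fix a faithful $\xi_0 \in \Rep(G)$. The crystallinity theorem of Esnault--Groechenig applied to $\xi_0 \circ \rho_{p,v}^\circ$ exhibits it as a crystalline local system on $\wh{\ms{S}}_\eta$, and the crystalline Riemann--Hilbert correspondence (Guo--Reinecke, or Faltings, or Tan--Tong) then produces a convergent $F$-isocrystal $\mc{E}_v(\xi_0) \in \FIsoc(\ms{S}_v)$ together with a functorial Frobenius- and filtration-compatible comparison $c_\crys(\xi_0)$. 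Its underlying flat bundle on $\wh{\ms{S}}_\eta$ is $D_\dR(\xi_0 \circ \rho_{p,v}^\circ)$, which by the DLLZ isomorphism \eqref{eqn:DLLZ-compatibility} is canonically identified with $\mc{V}_{E_v}^\circ(\xi_0)$.

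Stage 2 is the main obstacle. By \S\ref{ss.canonical-g-bundles} and \cref{lem:integral-model}(2), $\mc{V}_{E_v}(\xi_0)$ admits a logarithmic extension $\bar{\mc{V}}(\xi_0)$ to $(\bar{\ms{S}}, \ms{D})$ with nilpotent residues. Base-changing to $\mc{O}_v$ and passing to the formal completion of $\bar{\ms{S}}_{\mc{O}_v}$ yields a log flat bundle extending $\mc{V}_{E_v}^\circ(\xi_0)$ compatibly with the convergent isocrystal of Stage 1. The key input, following Esnault--Groechenig \cite{esnault-groechenig:cristallinity}, is a topological argument producing a logarithmic $F^a$-structure on this log flat bundle that extends the $F^a$-structure obtained in Stage 1. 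Once such a log $F$-structure is in place, Dwork's trick forces the log connection to be convergent in the log-isocrystal sense, yielding a convergent log-$F$-isocrystal on $(\bar{\ms{S}}_\kappa, \ms{D}_\kappa)$ with nilpotent residues along $\ms{D}_\kappa$. Its restriction via the fully faithful functor $\FIsoc(\bar{\ms{S}}_\kappa, \ms{D}_\kappa) \hookrightarrow \FIsocdag(\ms{S}_v)$ of \cite[Theorem 6.4.5]{kedlaya:semistable-reduction} then produces the desired docile overconvergent $F$-isocrystal $\mc{E}_v^\dagger(\xi_0)$.

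To extend from $\xi_0$ to all of $\Rep(G_{\QQ_p})$ and conclude parts (1) and (2), I will use that every $\xi \in \Rep(G_{\QQ_p})$ is a subquotient of a tensor polynomial in $\xi_0$ and $\xi_0^\vee$, together with \cref{cor:summand-overconvergent} (summands of $F$-able overconvergent isocrystals are themselves $F$-able overconvergent). Tannakian functoriality then extends the assignment $\xi_0 \mapsto \mc{E}_v^\dagger(\xi_0)$ uniquely to an exact tensor functor, and both the identification with $\mc{V}_{E_v}^\circ$ and the comparison $c_\crys(\xi)$ propagate to all $\xi$ by compatibility with tensor products, duals, and subobjects.

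For part (3), by Margulis superrigidity (\cref{assumption:superrigid}) and the density arguments of \cite{patrikis-klevdal:shimura}, the canonical local system $\rho_{p,v}^\ad$ on $S_{E_v}$ has Zariski-dense image in $G^\ad$; its restriction $\rho_{p,v}^{\circ, \ad}$ to $\wh{\ms{S}}_\eta$ also has Zariski-dense image, since classical points of $\wh{\ms{S}}_\eta$ specialize densely to points of $\ms{S}_v$ and the representation is determined by its behavior at such points. The crystalline comparison (2) identifies the Tannakian subcategory generated by $\mc{E}_v^{\dagger, \ad}$ with that generated by $\rho_{p,v}^{\circ, \ad}$, forcing the Tannakian monodromy groups to coincide over $\ol{\QQ}_\pi$. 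Hence $\rho_{\pi, v}^\ad$ has Zariski-dense, and therefore surjective, image in $G^\ad_{\ol{\QQ}_\pi}$.
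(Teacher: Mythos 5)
For parts (1) and (2) your outline follows essentially the same route as the paper: crystallinity of $\xi_0\circ\rho_{p,v}^{\circ}$ (via the Esnault--Groechenig/Fontaine--Laffaille input, in the paper quoted as \cite[Theorem 7.1]{pila-shankar-tsimerman:andre-oort}), identification of the underlying connection with $\mc{V}^\circ_{E_v}(\xi_0)$ through \cite[Theorem 5.3.1]{diao-lan-liu-zhu:log-riemann-hilbert}, the logarithmic extension with nilpotent residues, the rigidity argument giving an $F^a$-structure, and Kedlaya's log-to-dagger full faithfulness. One imprecision: the topological/rigidity argument only produces \emph{some} isomorphism $(F^\ast)^a\bar{\mc{V}}(\xi_0)_{E_v}\cong\bar{\mc{V}}(\xi_0)_{E_v}$; it does not, as you assert, produce a log $F^a$-structure ``extending the $F^a$-structure obtained in Stage 1,'' and the Frobenius entering $c_\crys$ must be the crystalline one. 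The repair is exactly \cref{cor:Fable-to-convergent}: once the underlying connection is known to lie in $\IsocdagF(\ms{S}_v)$, full faithfulness of $\IsocdagF(\ms{S}_v)\to\Isoc(\ms{S}_v)$ (and of $\FIsocdag\to\FIsoc$) shows that the crystalline $F$-structure coming from $\mbb{D}_\crys$ is automatically overconvergent, so one never needs the rigidity Frobenius to agree with it.

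Part (3), however, has a genuine gap. Your key step --- ``the crystalline comparison (2) identifies the Tannakian subcategory generated by $\mc{E}_v^{\dagger,\ad}$ with that generated by $\rho_{p,v}^{\circ,\ad}$, forcing the monodromy groups to coincide'' --- is unjustified and does not follow from the comparison isomorphism. The comparison relates an étale local system on the rigid generic fiber to an $F$-isocrystal on the special fiber, objects of different Tannakian categories; there is no functor between these categories (in the relevant direction) induced by $\otimes\,\mbb{B}_\crys$ that would transport Zariski-density of the étale monodromy to surjectivity of $\rho_{\pi,v}^\ad\colon\pi_1^{\FIsocdag}(\ms{S}_v)\to G^\ad_{\ol{\QQ}_\pi}$. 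Already over a point, $D_\crys$ is only fully faithful on crystalline representations when the filtration is retained; the $F$-isocrystal monodromy group forgets the filtration, so density on the Galois side does not formally yield density on the isocrystal side. (Your preliminary claim that $\rho_{p,v}^{\circ,\ad}$ has Zariski-dense image also needs an argument: $\wh{\ms{S}}_\eta$ is only an open piece of $S^\an_{E_v}$, and ``determined by behavior at specializing classical points'' is not a proof of density.) The paper avoids this entirely: using \cref{cor:canonical-log-F-isocrystals} it identifies the composite of $\mc{E}^{\log,\ad}_{v,\ol{\QQ}_\pi}$ with the realization functors (rigid and complex GAGA plus the complex Riemann--Hilbert correspondence, glued by \cite[Theorem 5.3.1]{diao-lan-liu-zhu:log-riemann-hilbert}) with restriction along the Zariski-dense subgroup $\Gamma\subset G^\ad(\CC)$, deduces that $\mc{E}^{\log,\ad}_{v,\ol{\QQ}_\pi}$ is fully faithful and reflects subobjects (faithfulness of the realization, irreducibility preservation), and then concludes surjectivity of the monodromy by \cite[Proposition 2.21]{deligne-milne}. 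Some argument of this type, passing through the underlying flat bundle and the topological fundamental group rather than the crystalline comparison, is needed to close your part (3).
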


The functor $\mc{E}^\dagger_v$ is the canonical overconvergent $G$-$F$-isocrystal on $\ms{S}_v$. By \cref{thm:overconvergent-Dcrys}-(3) we can apply \cref{thm:companions} to get a unique $\lambda$-adic companion of $\mc{E}^{\dagger, \ad}_v$; this allows us to state the main theorem of the paper:

\begin{thm}\label{thm:main}
Continue with notation as in \cref{thm:overconvergent-Dcrys}. Let $\lambda$ be a finite place of $\ol{\QQ}$ lying over a rational prime $\ell \neq p$. Let $\rho_{\pi \leadsto \lambda, v}^\ad$ be the $\lambda$-adic companion of $\mc{E}_v^{\dagger,\ad} \otimes_{\QQ_p} \ol{\QQ}_\pi$. Then the canonical $G^\ad(\QQ_\ell)$-adic local system $\rho_{\ell, v}^\ad$ is $G^\ad(\ol{\QQ}_\lambda)$-conjugate to $\rho_{\pi \leadsto \lambda, v}^\ad$. In particular, $\rho_{\ell, v}^\ad \otimes_{\QQ_\ell} \ol{\QQ}_\lambda$ is the $\lambda$-adic companion\footnote{In either the strong or weak sense, which are equivalent in this setting (see \cref{rmk:uniqueness-weak-companions}).} of $\mc{E}_v^{\dagger,\ad}$. 
\end{thm}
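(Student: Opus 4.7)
The plan is to reduce, via superrigidity, to an explicit Frobenius computation at a single special point, and then conclude using the uniqueness of weak companions in \cref{rmk:uniqueness-weak-companions}. Both $\rho_{\ell, v}^\ad \otimes_{\QQ_\ell} \ol{\QQ}_\lambda$ and $\rho_{\pi \leadsto \lambda, v}^\ad$ are continuous homomorphisms $\pi_1(\ms{S}_v, \bar{s}) \to G^\ad(\ol{\QQ}_\lambda)$ whose images are Zariski-dense in the connected semisimple group $G^\ad_{\ol{\QQ}_\lambda}$: for the companion this is \cref{thm:companions}(1), and for the canonical local system this is inherited from the Zariski-density of $\rho_\ell^\ad(\Gamma)$ (with $\Gamma = \pi_1^{\mr{top}}(S(\CC), \bar{s})$) established in \cite{patrikis-klevdal:shimura}. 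By \cref{rmk:uniqueness-weak-companions}, it therefore suffices to prove that these two representations are weak companions of one another, i.e.\ that their Frobenius conjugacy classes in $(G^\ad \git G^\ad)(\ol{\QQ})$ agree at every closed point of $\ms{S}_v$.

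Restricting both representations to the geometric fundamental group $\pi_1^{\mr{geom}}(\ms{S}_{v, \ol{\kappa(v)}})$, the images again lie Zariski-densely in $G^\ad_{\ol{\QQ}_\lambda}$, since $\Gamma$ maps through this geometric fundamental group via specialization from the complex fiber. By the $G$-superrigidity of $\Gamma$ guaranteed by \cref{assumption:superrigid} (see \cite[Proposition 4.3]{patrikis-klevdal:shimura}), there exists an automorphism $\tau \in \Aut(G^\ad_{\ol{\QQ}_\lambda})$ conjugating the geometric restriction of $\rho_{\pi \leadsto \lambda, v}^\ad$ to that of $\rho_{\ell, v}^\ad \otimes \ol{\QQ}_\lambda$. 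Consequently the Frobenius conjugacy classes of the two representations at any closed point of $\ms{S}_v$ coincide modulo the outer class of $\tau$ in $\Out(G^\ad_{\ol{\QQ}_\lambda})$. To finish it is enough to exhibit a single closed point of $\ms{S}_v$ at which the Frobenius classes agree on the nose in the regular semisimple locus of $G^\ad$, as this rigidifies $\tau$ to an inner automorphism.

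For such a point I would take a special point $y \in S(\bar{E})$, spreading it to $y \in \ms{S}(\mc{O}_{\bar{E}}[1/N])$ via the integrality property of special points from \cref{s.main-thm-statement} and letting $\bar{y}$ be its reduction at $v$. On the $\ell$-adic side, Deligne's reciprocity law for canonical models computes $\rho_\ell^\ad(\Frob_{\bar{y}})$ explicitly as the image of a CM Hecke character valued in the Mumford--Tate torus $T^\ad \subseteq G^\ad$ of $y$. On the $p$-adic side, the identification \eqref{eqn:DLLZ-compatibility} of $\mc{V}_{E_v}$ with $D_\dR^\alg(\rho_{p,v})$, combined with the crystalline comparison of \cref{thm:overconvergent-Dcrys}(2), identifies the fiber $\bar{y}^\ast \mc{E}_v^\dagger$ with the crystalline realization of the same Hecke character; applying \cref{thm:companions} then produces the same class in $T^\ad(\ol{\QQ})$ for $\rho_{\pi \leadsto \lambda, v}^\ad(\Frob_{\bar{y}})$, by the standard compatibility between $\ell$-adic and crystalline realizations of CM Hecke characters via local class field theory. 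Choosing $y$ so that this class is regular semisimple then forces $\tau$ to be inner, completing the proof. The principal obstacle is the $p$-adic special-point computation: tracking the identifications among the canonical flat $G$-bundle, $D_\dR^\alg$, $D_\crys$, and the companion construction, and verifying that the resulting element of $T^\ad$ is the one prescribed by CM theory, which in the end reduces to Shimura--Taniyama but requires delicate bookkeeping across the de Rham, crystalline, and isocrystalline frameworks.
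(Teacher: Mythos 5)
Your overall skeleton --- reduce via superrigidity to a Frobenius comparison at a special point, compute there via the reciprocity law for canonical models together with a crystalline CM calculation, and conclude by uniqueness of weak companions --- is the same as the paper's. But there is a genuine gap in how you eliminate the outer ambiguity. You claim that exhibiting a single closed point (the reduction of a special point) at which the two Frobenius classes agree and are regular semisimple ``rigidifies $\tau$ to an inner automorphism.'' That is false as stated: a nontrivial outer automorphism of $G^\ad$ acts on $G^\ad\git G^\ad\cong T/W$ with a positive-dimensional fixed locus, which contains many regular semisimple classes (for instance, for $\mathrm{PGL}_n$ with the transpose-inverse automorphism, any regular class whose eigenvalue multiset is stable under inversion is fixed). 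So agreement at one regular semisimple class does not distinguish $\tau$ from an inner automorphism. Moreover, at a special point you do not get to choose the class freely: it is dictated by CM theory (it lies in the image of a specific torus and is essentially a Weil-number datum), so one must argue that the classes actually produced by available special points rule out every nontrivial outer class. This is exactly what the paper does by running the argument of \cite[Theorem 3.10]{patrikis-klevdal:shimura}, constructing suitable special points via \cite[Proposition 3.7, Lemma 3.9]{patrikis-klevdal:shimura} in order to apply the group-theoretic criterion \cite[Proposition 2.8]{patrikis-klevdal:shimura}; your proposal needs an analogous input, and ``pick $y$ regular semisimple'' does not supply it.

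There are also two smaller gaps in your superrigidity step. First, the conclusion you draw --- that the Frobenius classes agree up to $\tau$ at all closed points --- requires $\rho_1=\tau\circ\rho_2$ on the full arithmetic $\pi_1(\ms{S}_v)$, not merely on the geometric fundamental group, since Frobenius elements do not lie in the geometric part; the paper gets this from the uniqueness-of-extension statement \cite[Lemma 2.1]{patrikis-klevdal:shimura}, using Zariski-density of the geometric image and that $G^\ad$ is adjoint. Second, Zariski-density of the geometric restriction of $\rho_{\pi\leadsto\lambda,v}^\ad$ is not automatic from ``$\Gamma$ maps through the geometric fundamental group'': it is proved by the normality argument (the Zariski closure of the geometric image is normal in $G^\ad_{\ol{\QQ}_\lambda}$ with commutative quotient, hence all of $G^\ad_{\ol{\QQ}_\lambda}$), and to pull back along the specialization map, which targets the tame fundamental group, you must know both representations are tame --- for the companion this comes from \cref{thm:companions}(2) together with the docility (unipotent monodromy) of the log extension in \cref{cor:canonical-log-F-isocrystals}. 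These points are fixable by the paper's cited lemmas, but they are genuine steps of the proof rather than bookkeeping, whereas the single-point rigidification of $\tau$ is an actual error in the argument as proposed.
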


\begin{rmk}
\label{rmk:loss-of-primes}
At various points in both the construction of the smooth integral model $\ms{S} \to \Spec(\mc{O}_E[1/N])$ and the proofs of Theorems \ref{thm:overconvergent-Dcrys} and \ref{thm:main}, we have to exclude certain primes, which we now make explicit.   

There is an initial uncontrollable loss of primes in \cref{lem:integral-model} due to the fact that we only have access to the soft integral models obtained by spreading out. More specifically, we use the integral model of \cite[Theorem 8.1]{pila-shankar-tsimerman:andre-oort} (which assumes that $N$ is even) obtained by spreading out. We need to then enlarge $N$ to ensure that the canonical flat vector bundle $\mc{V}(\xi_0)$ spreads out to the integral model $\ms{S}$, and then to make a further enlargement of $N$ as in \cite{patrikis-klevdal:shimura} to ensure that for all $\ell$, the canonical $G^\ad(\QQ_\ell)$-local systems extend to $\ms{S}[1/\ell]$. The final enlargement $N'$ is as in \cite[Theorem 7.1]{pila-shankar-tsimerman:andre-oort} to get log-crystallinity of the canonical $p$-adic local system. We remark that the use of the Fontaine--Laffaille theory places some restriction on how small $N'$ can be; c.f.\ \cref{rmk:crystallinity-via-EG25}.
In particular, $N$ must be divisible by $2$ and all rational primes $p$ that ramify in $E$. 
It is seems possible that one could remove these restrictions by using only $F$-isocrystals; see \cref{rmk:crystallinity-w/o-FL-modules}. 
\end{rmk}

\section{Proof of the main theorem}\label{s.main-thm-proof}
\subsection{Proof of \cref{thm:overconvergent-Dcrys}-(1),(2)}
Parts (1) and (2) of \cref{thm:overconvergent-Dcrys} will follow from \cite[Theorem 7.1]{pila-shankar-tsimerman:andre-oort} (based on Esnault--Groechenig's work \cite[Theorem A.22]{pila-shankar-tsimerman:andre-oort}), which shows that $\rho_{p,v}^\ast\xi_0$ is log-crystalline in the sense that it arises as the \'etale realization of a logarithmic Fontaine--Laffaille module on $(\bar{\ms{S}}_v, \ms{D}_v)$ (c.f.\  \cref{ss.fontaine-laffaille}). Using the fully faithful \'etale realization functor 
	\[ T_\logcrys \colon \MF^{\nabla}_{[0,p-2], \mr{lf}}(\bar{\ms{S}}_v, \ms{D}_v) \to \Loc_{\ZZ_p}(S_{E_v}), \]
which is written down explicitly in \cite{liu-yang-zuo:log-crystalline} (where it is called $\mbb{D}^{\log}$) we record the log-crystallinity of $\rho_{p,v}^\ast\xi_0$:

\begin{thm}[{\cite[Theorem 7.1]{pila-shankar-tsimerman:andre-oort}}]
\label{thm:crystallinity}
Let $\xi_0$ be a faithful representation of $G$ as in \cref{lem:integral-model}.
Then there exists a multiple $N'$ of $N$ such that for all for all $p\nmid N'$ and any place $v \mid p$ of $E$, the local system $\rho_{p,v}^{\ast}\xi_0$ on $S_{E_v}$ is log-crystalline up to a cyclotomic twist, i.e.\ there exists a logarithmic Fontaine--Laffaille module $M \in \MF^{\nabla}_{[0,p-2], \mr{lf}}(\ol{\ms{S}}_{\mc{O}_v}, \ol{\ms{D}}_{\mc{O}_v})$ such that $T_\logcrys(M)(n)$, a Tate twist of the \'etale realization of $M$, is a $\ZZ_p$-lattice in $\rho_{p,v}^\ast \xi_0$.
\end{thm}

\begin{rmk}\label{rmk:crystallinity-via-EG25}
The engine in the proof of this theorem is, along with Margulis superrigidity, the result \cite[Theorem A.22]{pila-shankar-tsimerman:andre-oort} of Esnault--Groechenig which produces a ``log-crystalline descent" of the canonical $\CC$-local system on $S_\CC$ attached to $\xi_0$. It is an application of work of Lan--Sheng--Zuo (\cite{lan-sheng-zuo:higgs-de-Rham-flow}) and Lan--Sheng--Yang--Zuo (\cite{lan-sheng-yang-zuo:uniformization}) on Higgs--de Rham flows and Faltings (\cite{faltings:crys-cohom-p-adic-galois-reps}) on Fontaine--Laffaille modules.
We give a proof the crystallinity of $\rho_{p,v}$ that (rather than using \cite{lan-sheng-yang-zuo:uniformization, lan-sheng-zuo:higgs-de-Rham-flow}) is based on the more recent work of Esnault--Groechenig \cite{esnault-groechenig:cristallinity} which we find particularly elegant. (The overall structure of our argument follows that of Esnault--Groechenig \cite[Theorem A.22]{pila-shankar-tsimerman:andre-oort}, making adjustments to use \cite{esnault-groechenig:cristallinity} in place of \cite{lan-sheng-yang-zuo:uniformization, lan-sheng-zuo:higgs-de-Rham-flow}.) 
\end{rmk}

\begin{proof}[Proof of \cref{thm:crystallinity}]
Suppose $\xi_0$ has rank $n$. The main point is to find an $N'$ such that that for every $p\nmid N'$, the local system
    \[ \xi_0 \circ \rho_{p, \CC} \colon \pi_1(S_\CC, \ol{s}) \to K_{0,p} \xrightarrow{\xi_0} \GL_n(\bar{\ZZ}_p) \]
has \emph{some} log-crystalline descent. This means that there exists a power $q$ of $p$, a morphism $\Spec(W(\mbb{F}_q)) \to \Spec(\mc{O}_E[1/N'])$, an embedding $\iota \colon W(\mbb{F}_q) \to \CC$, and a representation $\Psi \colon \pi_1(S_{W(\mbb{F}_q[1/p]}, \ol{s}) \to \GL_r(\bar{\ZZ}_p)$ that (up to a Tate twist) lies in the essential image of $T_\logcrys$ and such that the restriction of $\Psi$ to $\pi_1(S_{\CC}, \ol{s})$ is isomorphic to $\xi_0 \circ \rho_{p, \CC}$.

Once the log-crystalline descent $\Psi$ is produced, the rest of the proof of \cite[Theorem 7.1]{pila-shankar-tsimerman:andre-oort} goes through to show that $\rho_{p,v}^\ast \xi_0$ is log-crystalline up to cyclotomic twist. (Note that the proof of \cite[Theorem 7.1]{pila-shankar-tsimerman:andre-oort} only explains the existence of  $T_\logcrys(M)(n)$ over some finite unramified extension of $E_v$, but this suffices because the relevant categories of Fontaine--Laffaille modules and \'etale local systems satisfy \'etale descent, cf.\ \cite[Remark after Theorem 2.6]{faltings:crys-cohom-p-adic-galois-reps}.) 

By writing $\xi_0$ as a direct sum of irreducible representations, we find that it is enough to prove the following:

\begin{claim}\label{claim:log-crystalline-descent}
Let $\rho^\top \colon \pi_1^\top(S_\CC, \ol{s}) \to \GL_r(\CC)$ be an irreducible representation with unipotent monodromy around $D_\CC$. Then 
\begin{enumerate}
    \item There exists a number field $F \subseteq \CC$ such that, after replacing $\rho^\top$ by a $\GL_r(\CC)$-conjugate, $\mr{Image}(\rho^\top) \subseteq \GL_r(\mc{O}_F)$. 
\end{enumerate}
Let $p$ be a prime that does not ramify in $E$ or $F$, and choose an an embedding $\mc{O}_F \to \breve{\ZZ}_p \coloneqq W(\bar{\mbb{F}}_p)$ and map $\Spec(W(k)) \to \Spec(\mc{O}_E[1/N'])$ for $k \subseteq \bar{\mbb{F}}_p$ a finite field. We define, for $L \coloneqq W(k)[p^{-1}]$ and $\ol{L}$ an algebraic closure of $L$, representations 
    \[ \rho_{\ol{L}} \colon \pi_1(S_{\ol{L}}, \ol{s}) \cong  \pi_1(S_\CC, \ol{s}) \to \GL_r(\breve{\ZZ}_p), \]
with the latter arrow the unique continuous extension of $\rho^\top$. Then 
\begin{enumerate}[resume]
    \item There a multiple $N'$ of $N$ such that for all $p \nmid N'$, there is (after possibly enlarging $k$) a representation 
        \[ \rho_{L} \colon \pi_1(S_L, \ol{s}) \to \GL_r(\breve{\ZZ}_p) \]
    that (up to a cyclotomic twist and scalar extension) lies in the essential image of\footnote{The category $\mbb{Z}_{p^f}\text{-}\MF_{[0, p-2], \lf}^\nabla$ consists of objects in $\MF_{[0, p-2], \lf}^\nabla$ equipped with a $\ZZ_{p^f}$-structure. }
        \[ T_\logcrys \colon \mbb{Z}_{p^f}\text{-}\MF_{[0, p-2], \lf}^\nabla(\bar{\ms{S}}_{W(k)}, \ms{D}_{W(k)}) \to \Loc_{\mbb{Z}_{p^f}}(S_{L}).\]
    for some $f$ and whose restriction to $\pi_1(S_{\ol{L}}, \ol{s})$ is $\rho_{\ol{L}}$. 
\end{enumerate}
and our aim is to show that each $\rho_{i, \ol{L}}$ is (the scalar extension of) the restriction to $\pi_1(S_{\ol{L}}, \ol{s})$ of the \'etale realization of some object of a Fontaine--Laffaille module (with $\ZZ_{p^f}$-structure for some $f \geq 1$). 
\end{claim} 

We will now prove the claim, where it is convenient to work with all irreducible $\CC$-local systems on $S_\CC$ at once. Our starting point is a consequence of Margulis superrigidity (\cite[Chapter IX, Theorem 6.15-(ii)]{margulis} or \cite[Chapter IX, Theorem 5.10-(ii)]{margulis}) which implies that every local system on $S_\CC$ is strongly cohomologically rigid. Hence for any fixed rank $r$, there are finitely many irreducible $\CC$-local systems on $S_\CC$ of rank $r$ with unipotent monodromy around the boundary divisor $D_\CC$. By rigidity \cite[Theorem 1.1]{esnault-groechenig:integrality}, we can choose integral representatives 
    \[\rho_1^\top, \ldots, \rho_n^\top \colon \pi_1^\top(S_\CC, \ol{s}) \to \GL_r(\mc{O}_F) \]
of these $\CC$-local systems, where $\mc{O}_F$ is the ring of integers in a number field $F$; this gives \cref{claim:log-crystalline-descent}-(1). For $p$ not ramified in $E$ or $F$, we fix $\mc{O}_F \to \breve{\ZZ}_p \coloneqq W(\bar{\mbb{F}}_p)$ and $\Spec(W(k)) \to \Spec(\mc{O}_E[1/N'])$ as in the claim, and we let
    \[ \rho_{i,\ol{L}} \colon \pi_1(S_{\ol{L}}, \ol{s}) \cong  \pi_1(S_\CC, \ol{s}) \to \GL_r(\breve{\ZZ}_p), \]
be the unique continuous extension of $\rho_i^\top$. Finally, we assume that $N'$ is a multiple of $N$ that satisfies the following:
\begin{enumerate}
    \item $N'$ is divisible by any of the rational primes that ramify in either $E$ or $F$ (we have already used this assumption to define $\rho_{i, \ol{L}}$).  
    \item Let $\bar{\mc{M}}_{1,\CC}, \ldots, \bar{\mc{M}}_{n,\CC} \in \MIC^\nilp(\bar{S}_\CC, D_\CC)$ correspond under the Riemann--Hilbert correspondence to $\rho_1^\top, \ldots, \rho_n^\top$. (We remark for later use that these represent all isomorphism classes of irreducible rank $R$ objects in $\MIC^\nilp(\bar{S}_\CC, D_\CC)$ by Riemann--Hilbert.) By Riemann--Hilbert and Margulis superrigidity \cite[Chapter IX, Theorem 6.15-(ii)]{margulis}, these are all strongly cohomologically rigid \cite[Definition A.1]{pila-shankar-tsimerman:andre-oort} and hence each $\bar{\mc{M}}_{i,\CC}$ admits a Griffiths-transverse filtration $\Fil^\bullet \mc{M}_{i, \CC}$ \cite[Theorem A.8]{pila-shankar-tsimerman:andre-oort}. Then we assume that each of these vector bundles $\bar{\mc{M}}_{i, \CC}$ with connection and filtration spreads out to give a strongly cohomologically rigid vector bundle $\bar{\mc{M}}_{i, \mc{O}_E[1/N']}$ with filtration and connection (relative to $\mc{O}_E[1/N']$).
    \item $N'$ is divisible by any prime $p$ where at least one of the filtrations considered above on some $\bar{\mc{M}}_{i,\mc{O}_{E[1/N']}}$ is \emph{not} concentrated in degree $[0, p-2]$.\footnote{This final condition is needed so that we can apply the global Fontaine--Laffaille theory.}

\end{enumerate}

We let $\bar{\mc{M}}_{i, W(k)}, \bar{\mc{M}}_{i,L}$ be the pullback of $\bar{\mc{M}}_{i, \mc{O}_E[1/N']}$ to $\Spec(W(k)), \Spec(L)$ respectively. Since $\bar{\mc{M}}_{i, W(k)}$ have a $W(k)$-linear connection and filtration, it only remains to put Frobenius structure on each to produce Fontaine--Laffaille modules. 
The Frobenius pullback functor $F^\ast$ acts as a permutation on the set of isomorphism classes of $\bar{\mc{M}}_{1, L}, \ldots, \bar{\mc{M}}_{n, L}$ 
\cite[Lemma 4.4, Lemma 5.4]{esnault-groechenig:cristallinity}, so by the pigeonhole principle we have $(F^\ast)^f \mc{M}_{i,L} \cong \mc{M}_{i, L}$ for some $f \geq 1$. This produces the $F^f$-isocrystals that underlie the Fontaine--Laffaille modules. One then produces the $f$-periodic Higgs--de Rham flows using \cite[Corollary 3.40]{esnault-groechenig:cristallinity} and constructs $\mathbb{Z}_{p^f}$-Fontaine--Laffaille modules as in \cref{rmk:HdR-flow-explicit} (here we need $k$ to contain $\mbb{F}_{p^f}$). Applying $T_\logcrys$ to the $\ZZ_{p^f}$-Fontaine--Laffaille modules associated to the $\bar{\mc{M}}_{i, W(k)}$ gives a collection of crystalline representations
    \[ \rho_{i, L}' \colon \pi_1(S_L, \ol{s}) \to \GL_r(\ZZ_{p^f}). \]
Finally, defining $\rho_{i, \ol{L}}' \coloneqq \rho_{i, L}'|\pi_1(S_{\ol{L}}, \ol{s})$, we have an inclusion
    \[ \{ [\rho_{i, \ol{L}}'] : i = 1, \ldots, n \} \subseteq \{ [\rho_{i, \ol{L}}]: i = 1,\ldots, n \}, \]
where the brackets represent $\GL_r(\ol{\QQ}_p)$-conjugacy classes. Therefore, the pigeonhole principle will finish the proof of the claim once we show that the $[\rho_{i, \ol{L}}']$ are all distinct.

Assume that $[\rho_{i, \ol{L}}'] = [\rho_{j, \ol{L}}']$.
Then there is some finite extension $M$ of $L$ such that the restictions $\rho_{i,M}', \rho_{j, M}'$ of $\rho_{i,L}', \rho_{j,L}'$ to $\pi_1(S_F,\ol{s})$ are isomorphic. This implies that $D_\dR^{\log}(\rho_{i,M}') = D_\dR^{\log}(\rho_{i, L}')|_{S_M}$ and $D_\dR^{\log}(\rho_{j,F}') = D_\dR^{\log}(\rho_{j, L}')|_{S_M}$ are isomorphic, for $D_\dR^{\log}$ as in \cite[Theorem 3.27]{diao-lan-liu-zhu:log-riemann-hilbert}.\footnote{Strictly speaking, we need to input a Kummer \'etale $\ZZ_p$-local system, but the construction of $T_\logcrys(\bar{\mc{M}}_{i,W(k)})$ in \cite{liu-yang-zuo:log-crystalline} is visibly the restriction of a Kummer \'etale $\ZZ_p$-local system.} But \cite[Theorem 2.9]{pila-shankar-tsimerman:andre-oort} shows that $D_\dR^{\log}(\rho_{i, L}') = \bar{\mc{M}}_{i, L}$ and $D_\dR^{\log}(\rho_{j, L}') = \bar{\mc{M}}_{j, L}$. 
Therefore, $\bar{\mc M}_{i,\mathbb C}=\bar{\mc M}_{j,\mathbb C}$, so $i=j$, as desired.
\end{proof}

\begin{rmk}\label{rmk:crystallinity-w/o-FL-modules}
It seems possible that the argument for crystallinity of the canonical local systems could be somewhat simplified to use only $F$-isocrystal theory and bypass the more delicate Fontaine--Laffaille theory: in the above argument we constructed directly the $F^f$-isocrystals $\bar{\mc{M}}_{i, L}$ first, and then constructed Fontaine--Laffaille modules to produce crystalline local systems. If one had an equivalence of categories between log crystalline local systems and logarithmic $F$-isocrystals (e.g.\ as in \cite[Theorem 3.18]{tan-tong:crystalline} in the non logarithmic setting; it is possible that this follows from results in \cite{guo-yang:pointwise}) then one can produce the requisite crystalline local systems on $S_L$ directly from the $F^f$-isocrystals, and make the pigeonhole argument using compatibility of the appropriate $\mbb{D}_\crys$,  $D_\dR$ and $D_\dR^{\alg}$. We do not pursue this line of reasoning here. 
\end{rmk}

We now return to the proof of \cref{thm:main}-(1) with the knowledge that $\rho_{p, v}^\ast(\xi_0)$ contains a $\ZZ_p$-lattice isomorphic to $T_\logcrys(M)(n)$ for some $M \in \MF^{\nabla}_{[0,p-2], \mr{lf}}(\bar{\ms{S}}_{\mc{O}_v}, \ms{D}_{\mc{O}_v})$. Define $\mc{E}_v^{\log}(\xi_0) \in \FIsoc(\bar{\ms{S}}_v, \ms{D}_v)$ to be the logarithmic $F$-isocrystal associated to $M(n)$ (\cref{rmk:MF-to-FIsoc}), and $\mc{E}^\dagger(\xi_0), \mc{E}_v(\xi_0)$ the restriction to $\FIsocdag(\ms{S}_v), \FIsoc(\ms{S}_v)$ respectively. Then \cite[Remark after Theorem 5.6]{faltings:crys-cohom-p-adic-galois-reps} shows that $\mc{E}_v(\xi_0)$ is \emph{associated} to $\rho_{p,v}^{\circ, \ast} \xi_0$ in the sense of Faltings, c.f.\ the discussion preceding \cite[Lemma 5.5]{faltings:crys-cohom-p-adic-galois-reps}, and hence $\rho_{p,v}^{\circ, \ast}(\xi_0)$ is crystalline in the sense of Faltings. We remark that this is \emph{precisely} the definition of crystallinity used in \cite[Definition 2.31]{guo-reinecke:prismatic} and is equivalent to the definition of crystallinity in \cite{tan-tong:crystalline} by \cite[Proposition 3.21]{tan-tong:crystalline}; when we refer to crystalline local systems in the remainder of the section, we mean one of these equivalent definitions.  

We let $\Loc_{\hat{\QQ}_p}^\crys(\wh{\ms{S}}_\eta)$ denote the full subcategory of $\Loc_{\hat{\QQ}_p}(\wh{\ms{S}}_\eta)$ consisting of crystalline local systems. This is a Tannakian subcategory, meaning in particular that it is closed under subquotients.\footnote{
This can be seen, for example, by applying the pointwise criterion of crystallinity \cite{guo-yang:pointwise} to reduce to the case of Galois representations, where stability under subquotients is well-known.
}
Therefore, since $\xi_0$ is a tensor generator of $\Rep(G_{\QQ_p})$, the image of $\rho_{p,v}^{\circ,\ast}$ lies in the subcategory of crystalline local systems.
So we may define $\mc E_v$ to be the following composition of functors:
\[\begin{tikzcd}
	{\Rep(G_{\QQ_p})} & {\Loc_{\hat{\QQ}_p}^\crys(\wh{\ms{S}}_\eta)} \\
	& {\FIsoc(\ms{S}_v)}.
	\arrow["{\rho_{p,v}^{\circ, \ast}}", from=1-1, to=1-2]
	\arrow["{\mc{E}_v}"', from=1-1, to=2-2]
	\arrow["{\mbb{D}_\crys}", from=1-2, to=2-2]
\end{tikzcd}\]
Here $\mbb{D}_\crys$ is defined in \cite[Definition 3.12]{tan-tong:crystalline}. 
Note also by \cite[Proposition 3.21]{tan-tong:crystalline}, there is no ambiguity in the meaning of $\mc{E}_v(\xi_0)$, and more importantly $\mc{E}_v$ satisfies the conclusion of  \cref{thm:overconvergent-Dcrys}-(2). To finish the proof of \cref{thm:overconvergent-Dcrys}-(1),(2), it remains to extend $\mc{E}_v$ to a tensor functor $\mc{E}_v^{\mr{log}}$. For this, consider the following diagram: 
\[\begin{tikzcd}
	{\Rep(G_{\QQ_p})} &&&& \\
	& {\FIsoc(\bar{\ms{S}}_v, \ms{D}_v)} & {\FIsocdag(\ms{S}_v)} && {\FIsoc(\ms{S}_v)} \\
	&& {\IsocdagF(\ms{S}_v)} && {\Isoc(\ms{S}_v)} \\
	& {\MIC^{\mr{nilp}}(\bar{S}_{E_v}, D_{E_v})} && {\MIC(\wh{\ms{S}}_\eta)}
	\arrow["{\small (\mr{C})}"{description}, dashed, from=1-1, to=2-2]
	\arrow["{\small (\mr{B})}"{description}, curve={height=-6pt}, dashed, from=1-1, to=2-3]
	\arrow["{\mbb{D}_\crys \circ \rho_{p,v}^{\circ \ast}}", shift left, curve={height=-12pt}, from=1-1, to=2-5]
	\arrow["{\small (\mr{A})}"{description}, shift right, curve={height=30pt}, dashed, from=1-1, to=3-3]
	\arrow["{\bar{\mc{V}}_{E_v}}"', curve={height=18pt}, from=1-1, to=4-2]
	\arrow["{\small (\mr{C}')}"{description}, from=2-2, to=2-3]
	\arrow[from=2-2, to=4-2]
	\arrow[from=2-3, to=2-5]
	\arrow["{\small (\mr{B}')}"{description}, from=2-3, to=3-3]
	\arrow[from=2-5, to=3-5]
	\arrow["{\small (\mr{A}'')}"{description}, from=3-3, to=3-5]
	\arrow["{\small (\mr{A}')}"{description}, from=3-3, to=4-4]
	\arrow["{\small (\mr{A}''')}"{description}, from=3-5, to=4-4]
	\arrow[from=4-2, to=4-4]
\end{tikzcd}\]
All solid straight lines are the obvious restriction or forgetful functors. The functor $\bar{\mc{V}}_{E_v}$ is defined in \cref{ss.canonical-g-bundles}, and the $2$-commutativity\footnote{i.e.\ commutativity up to natural isomorphism} of the outer diagram follows from the discussion of \cref{ss.p-adic-compatibilities} and compatibility of $\mbb{D}_\crys$ and $D_\dR$ \cite[Proposition 3.22]{tan-tong:crystalline}. 
We claim that we can define the dashed arrows so that the entire diagram is 2-commutative. 

We begin with the functor $(\mr{A})$. First note that $(\mr{A}')$ is fully faithful, since both $(\mr{A}'')$ \cite[Corollary 5.7]{daddezio-esnault:universal-ext} and $(\mr{A}''')$ \cite[Theorem 2.15]{ogus:f-isoc-2} are. Thus we can view $\IsocdagF(\ms{S}_v)$ as a full subcategory of $\MIC(\wh{\ms{S}}_\eta)$, and we just need to check that for any $\theta \in \Rep(G_{\QQ_p})$, the underlying integrable vector bundle of $\mbb{D}_\crys \circ \rho_{p,v}^{\circ \ast}(\theta)$ is in the essential image of $(\mr{A}')$. This holds for $\xi_0$ since $\mbb{D}_\crys \circ \rho_{p,v}^{\circ \ast}(\xi_0) = \mc{E}_v(\xi_0)$ has a logarithmic extension $\mc{E}_v^{\log}(\xi_0)$. Consequently it holds for any direct sum of tensor powers of $\xi_0$ and its dual. Since $G_{\QQ_p}$ is reductive and $\xi_0$ is faithful, any other $\theta \in \Rep(G_{\QQ_p})$ is a direct summand of such a representation, and we are done once we show that the essential image of $(\mr{A}')$ is closed under direct summands. This follows since $(\mr{A}')$ is fully faithful: if $(\mr{A}')(M) \in \MIC(\wh{\ms{S}}_v)$ lies in the essential image and $N$ is a direct summand of $(\mr{A}')(M)$ in $\MIC(\wh{\ms{S}}_v)$, then the projector $(\mr{A}')(M) \twoheadrightarrow N \hookrightarrow (\mr{A}')(M)$ arises from a unique projector $M \to M$ in $\IsocdagF(\ms{S}_v)$ whose image maps to $N$ under $(\mr{A}')$. 

In order to define the functor $(\mr{B})$ we use again that $(\mr{A}'')$ is fully faithful, so we can use the $F$-structure on $\mbb{D}_\crys \circ \rho_{p, v}^{\circ \ast}$ to lift $(\mr{A})$ uniquely to a functor $(\mr{B})$ (making that part of the diagram $2$-commute). Finally, to define the functor $(\mr{C})$, we use a similar argument based on the fact that $(\mr{C}')$ is fully faithful \cite[Theorem 6.4.5]{kedlaya:semistable-reduction}. The tensor functor 
    \[ \mc{E}_v^{\log} \colon \Rep(G_{\QQ_p}) \to \FIsoc(\bar{\ms{S}}_v, \ms{D}_v) \]
of \cref{thm:overconvergent-Dcrys}-(1) is then functor $(\mr{C})$ in the diagram above. This finishes the proof. 

\begin{rmk}
Our approach to proving the overconvergence of $\mc{E}_v$ is ad hoc but well-suited for our purposes. A more systematic approach (using general principles of relative $p$-adic Hodge theory) should allow one to prove a more general theorem on overconvergence of $\mbb{D}_\crys$.
Namely, suppose we are in the setting where $X$ is a smooth separated scheme over $W = W(\kappa)$ for $\kappa$ a perfect field of characteristic $p$ with special fiber $X_\kappa$, $p$-adic completion $\wh{X}$, rigid generic fiber $\wh{X}_\eta$, and generic fiber $X_K$ with analytification $X_K^\an$.
Then $\wh{X}_\eta \subseteq X_K^\an$ is an open immersion, and we have a relative $\mbb{D}_\crys$ functor 
    \[ \mbb{D}_\crys \colon \Loc_{\QQ_p}(\wh{X}_\eta) \to \FIsoc(X_\kappa), \]
with $\FIsoc(X_\kappa)$ the category of convergent $F$-isocrystals. A general crystalline local system $\mbb{V} \in \Loc_{\QQ_p}(\wh{X}_\eta)$ will not admit an overconvergent extension of $\mbb{D}_{\crys}(\mbb{V})$ (as the theory of the canonical subgroup on the ordinary-reduction locus of the modular curve shows), but if $\mbb{V}$ admits an extension $\tilde{\mbb{V}}$ to a $\QQ_p$-local system on some strict neighborhood $U$ with $\wh{X}_\eta \subsetneq U \subseteq X_K^\an$, then we expect to have an overconvergent extension $\mbb{D}^\dagger_\crys(\mbb{V})$ of $\mbb{D}_\crys(\mbb{V})$. One should be able to argue, assuming existence of appropriate $\mbb{D}_\crys$ and $\mbb{D}_{\mr{st}}$ functors, as follows: by \cite{liu-zhu:riemann-hilbert}, $\tilde{\mbb{V}}$ is de~Rham and hence potentially log-crystalline (by combining results of \cite{kedlaya:monodromy-in-families} and \cite{guo-yang:pointwise}). The overconvergence of $\mbb{D}_\crys({\mbb{V}})$ can be checked \'etale locally, so we can thus assume $\tilde{\mbb{V}}$ is log-crystalline, and the convergence of $\mbb{D}_\mr{st}(\tilde{\mbb{V}}) = \mbb{D}_\crys(\tilde{\mbb{V}})$ gives the desired overconvergence. It seems possible that the $\mbb{D}_{\mr{st}}$ functor of \cite[Theorem 5.20]{guo-yang:pointwise} has the necessary properties, though the authors have not thought about this (one potential subtlety is that \emph{loc.\ cit.\ }requires smooth formal models, while moving beyond the rigid generic fiber $\wh{X}_\eta$ is only possible with semistable formal models). 
\end{rmk}

\subsection{Proof of \cref{thm:overconvergent-Dcrys}-(3)}
{For brevity, let us say that a functor ``reflects subobjects/direct summands'' to mean that its essential image is closed under taking subobjects/direct summands.} We observed earlier that $\FIsoc(\bar{\ms{S}}_v, \ms{D}_v) \to \FIsocdag(\ms{S}_v)$ is fully faithful; it reflects subobjects by \cite{kedlaya:semistable-reduction} (see also \cite{shiho:log-ext-overconv-isoc}). Indeed, the essential image consists of the overconvergent $F$-isocrystals which have unipotent monodromy \cite[Definition 4.4.2]{kedlaya:semistable-reduction}, and for any overconvergent $F$-isocrystal with unipotent monodromy, any subobject also has unipotent monodromy (this reduces to a local statement, which is shown in the proof of \cite[Proposition 3.2.20]{kedlaya:semistable-reduction}).

By \cite[Proposition 2.21]{deligne-milne}, proving \cref{thm:overconvergent-Dcrys}-(3) is equivalent to showing that the composition (using the notion of scalar extension of a Tannakian categories/exact tensor functors in \cite{deligne-milne})
    \[ \mc{E}_{v, \ol{\QQ}_\pi}^{\dagger, \ad} = \mc{E}_v^{\dagger, \ad} \otimes_{\QQ_p} \ol{\QQ}_\pi \colon \Rep(G^\ad_{\ol{\QQ}_\pi}) \to \Rep(G_{\ol{\QQ}_\pi}) \to \FIsocdag(\bar{\ms{S}}_v, \ms{D}_v)_{\ol{\QQ}_\pi} \]
is fully faithful and reflects subobjects. Since the functor $\FIsoc(\bar{\ms{S}}_v, \ms{D}_v) \to \FIsocdag(\ms{S}_v)$ is fully faithful and reflects subobjects, we need to show that 
    \[ \mc{E}_{v, \ol{\QQ}_\pi}^{\log,\ad} = \mc{E}_v^{\log,\ad} \otimes_{\QQ_p} \ol{\QQ}_\pi \colon \Rep(G^\ad_{\ol{\QQ}_\pi}) \to \Rep(G_{\ol{\QQ}_\pi}) \to \FIsoc(\bar{\ms{S}}_v, \ms{D}_v)_{\ol{\QQ}_\pi} \]
is fully faithful and reflects subobjects. Since $\Rep(G^\ad_{\ol{\QQ}_\pi})$ is semisimple, every subobject is a direct summand, so it suffices to show that $\mc{E}_{v, \ol{\QQ}_\pi}^{\log, \ad}$ is fully faithful (by the same argument previously made that fully faithful functors reflect direct summands).  

To do so, fix an isomorphism $\iota \colon \ol{\QQ}_\pi \xrightarrow{\sim} \CC$, and consider the diagram
\begin{equation}\label{eqn:monodromy-triangle}
\begin{tikzcd}
	{\Rep(G^\ad_{\ol{\QQ}_\pi})} & {\FIsoc(\bar{\ms{S}}_v, \ms{D}_v)_{\ol{\QQ}_\pi} } \\
	& {\Rep_\CC(\Gamma)}
	\arrow["{\mc{E}_{v, \ol{\QQ}_\pi}^{\log,\ad}}", from=1-1, to=1-2]
	\arrow["B"', from=1-1, to=2-2]
	\arrow["{B'}", from=1-2, to=2-2]
\end{tikzcd}
\end{equation}

where $B$ is the composition $\Rep(G^\ad_{\ol{\QQ}_\pi}) \xrightarrow{\sim} \Rep(G^\ad_\CC)\to\Rep_\CC(\Gamma)$ induced by $\iota$ and pullback along $\rho^\mr{top} \colon \Gamma = \pi_1^{\mr{top}}(S_\CC, \bar{x}) \to G^\ad(\CC)$ and $B'$ is the composition
\[
\begin{tikzcd}
\FIsoc(\bar{\ms{S}}_v, \ms{D}_v)_{\ol{\QQ}_\pi} 
\arrow[r]&
\MIC^{\nilp}(\bar{S}_{E_v}^\an, D_{E_v}^\an)_{\ol{\QQ}_\pi}
\arrow[r,"\mr{GAGA}"]&
\MIC^{\nilp}(\bar{S}_{E_v}, D_{E_v})_{\ol{\QQ}_\pi}
\arrow[lld,"\iota"',out=-120,in=60,looseness=0.2]\\
\MIC^{\nilp}(\bar{S}_\CC, D_\CC)
\arrow[r,"\mr{GAGA}"]&
\MIC^{\nilp}(\bar{S}_\CC^\an, D_\CC^\an)
\arrow[r,"\mr{restrict}"]&
\MIC(S_\CC^\an)
\arrow[r,"\mr{RH}"]&
\Rep_\CC(\Gamma)
\end{tikzcd}
\]
(here we used rigid GAGA, complex GAGA, and the complex-analytic Riemann--Hilbert correspondence).
The construction of $\mc{E}^{\log}_v$ identifies the composition of $\mc{E}_{v}^{\log,\ad}$ and the arrows in the top row with the canonical logarithmic $G$-bundle on $\bar{S}_{E_v}$ of \cref{ss.canonical-g-bundles}. We may thus apply \cite[Theorem 5.3.1]{diao-lan-liu-zhu:log-riemann-hilbert} to conclude that the triangle \eqref{eqn:monodromy-triangle} is 2-commutative. 
Now $B$ is fully faithful since $\Gamma$ is Zariski-dense in $G^\ad(\CC)$, and $B'$ is faithful, so $\mc{E}_{v, \ol{\QQ}_\pi}^{\log,\ad}$ is fully faithful.

\subsection{Proof of \cref{thm:main}}
The following lemma immediately implies \cref{thm:main} upon applying it with $(\rho_1,\rho_2)=(\rho_{\pi\leadsto\lambda,v}^\ad,\rho_{\ell,v}^\ad)$; the necessary hypotheses are checked afterwards. 
\begin{lem}
Let $\lambda$ be a finite place of $\ol{\QQ}$ and $\rho_1,\rho_2\colon\pi_1(\ms{S}_{v})\to G^\ad(\ol{\QQ}_\lambda)$ be two tame $G^\ad(\ol{\QQ}_\lambda)$-local systems with Zariski-dense image. Then 
\begin{enumerate}
\item There exists $\tau\in\Aut(G^\ad_{\ol{\QQ}_\lambda})$ such that $\rho_1 = \tau \circ \rho_2$.
\end{enumerate}
Assume further that $\rho_1, \rho_2$ are compatible at special points in the following sense: for every special point $s \in \ms{S}(\mc{O}_C[1/N])$ (for $C$ a finite extension of $E$) and every closed point $x \in s \cap |\ms{S}_v|$  the semisimple conjugacy classes $[\rho_i(\Frob_x)], i = 1,2$ in $[G^\ad\sslash G^\ad](\ol{\QQ}_\lambda)$ are equal. Then
\begin{enumerate}[resume]
\item The automorphism $\tau$ of $(1)$ is inner. In particular, the semisimple Frobenius conjugacy classes $[\rho_i(\Frob_x)]$ are equal for all closed points $x \in |\ms{S}_v|$. 
\end{enumerate}
\end{lem}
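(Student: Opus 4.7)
The plan is to combine Margulis superrigidity with a density argument at reductions of special points, essentially paralleling the strategy of \cite{patrikis-klevdal:shimura} but applied to the new pair of representations.

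For part (1), the strategy is to descend the equality to representations of $\Gamma = \pi_1^\top(S_\CC, \bar s)$, where Margulis superrigidity applies. Using that $\rho_1, \rho_2$ are tame along the boundary divisor $\ms{D}_v$ and that $\lambda$ does not divide the residue characteristic $p$, I would employ the smooth base change and SGA 1 specialization theorems to produce a map $\Gamma \to \pi_1(\ms{S}_v)$ (factoring through $\pi_1(S_\CC) \cong \pi_1(S_{\bar E_v})$ and the prime-to-$p$ tame specialization isomorphism with $\pi_1^t(\ms{S}_{\bar\kappa(v)})^{(\ell)}$) such that the composites $\rho_i|_\Gamma$ retain Zariski-dense image. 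The $G^\ad$-superrigidity of $\Gamma$ (\cite[Proposition 4.3]{patrikis-klevdal:shimura}, applicable thanks to Assumption \ref{assumption:superrigid}) then furnishes $\tau \in \Aut(G^\ad_{\ol\QQ_\lambda})$ with $\rho_1|_\Gamma = \tau\circ\rho_2|_\Gamma$. Finally, this equality promotes from $\Gamma$ to all of $\pi_1(\ms{S}_v)$ because the image of $\Gamma$ is dense in a sufficiently large quotient through which both sides factor.

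For part (2), given $\tau$ from part (1), the special-point compatibility reads $[\tau(\rho_2(\Frob_x))] = [\rho_2(\Frob_x)]$ in $(G^\ad\sslash G^\ad)(\ol\QQ_\lambda)$ for every $x \in |\ms{S}_v|$ obtained by reducing a special point. I would then show $\tau$ is inner by contradiction: if the class of $\tau$ in $\Out(G^\ad_{\ol\QQ_\lambda})$ is nontrivial, it acts on $G^\ad\sslash G^\ad$ with fixed locus a proper closed subvariety, so it suffices to establish that the set $\{[\rho_2(\Frob_x)] : x \text{ reduces a special point}\}$ is Zariski-dense in $G^\ad\sslash G^\ad$. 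This density combines the Zariski-density of special points in $S$ (classical for Shimura varieties) with explicit control of Frobenius at special points via the Shimura--Taniyama reciprocity law for canonical models, which identifies $\rho_2(\Frob_x)$ as an explicit element of a CM torus $T \subset G$ attached to the special point; varying the CM datum and the Galois element sweeps out a sufficiently rich set of semisimple conjugacy classes. Once $\tau$ is inner, one adjusts by an inner automorphism to assume $\tau = 1$, at which point equality of Frobenius conjugacy classes at all closed points is immediate.

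The central obstacle is the Zariski-density of special-point Frobenius classes in $G^\ad\sslash G^\ad$. This is morally the same density that powers \cite[proof of Theorem 3.10]{patrikis-klevdal:shimura} in the $\ell$-vs-$\ell'$ setting, and the argument there adapts to the present situation; the only substantive change is that one endpoint is now the $p$-adic companion. A secondary technical point is the specialization setup in part (1) navigating the residue characteristic $p$, but with the tameness hypothesis built into the statement this reduces to standard SGA 1 facts applied to the smooth compactification $\bar{\ms{S}}$ with strict normal crossings boundary supplied by \cref{lem:integral-model}.
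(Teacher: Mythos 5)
Your overall strategy matches the paper's (superrigidity via specialization for (1), special points for (2)), but both halves have concrete gaps as written. In (1), the final promotion step is not correct as stated: the image of $\Gamma$ is dense only in the \emph{geometric} tame fundamental group $\pi_1^{\mr{t}}(\ms{S}_{\ol{v}})$, which is a closed normal \emph{subgroup} of $\pi_1^{\mr{t}}(\ms{S}_v)$, not a quotient of $\pi_1(\ms{S}_v)$ through which $\rho_1,\rho_2$ factor --- the arithmetic Frobenius quotient $\wh{\ZZ}$ is invisible to $\Gamma$, so there is no ``sufficiently large quotient'' of the kind you invoke. Superrigidity only yields $\ol{\rho}_1=\tau\circ\ol{\rho}_2$ on the geometric part; the missing (standard) step, which the paper supplies by citing \cite[Lemma 2.1]{patrikis-klevdal:shimura}, is that for $g\in\pi_1(\ms{S}_v)$ the element $(\tau\rho_2(g))^{-1}\rho_1(g)$ centralizes the Zariski-dense image of $\ol{\rho}_1$ and hence lies in the trivial center of $G^\ad$, giving equality on all of $\pi_1(\ms{S}_v)$. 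Relatedly, you assert but do not prove that the restrictions $\ol{\rho}_i$ (hence $\rho_i^{\mr{top}}$) keep Zariski-dense image; this needs the normality argument (the closure of $\ol{\rho}_i$'s image is normal in $G^\ad$ with commutative quotient, hence everything). Also note the lemma does not assume $\lambda\nmid p$; tameness alone makes the specialization argument run, so you should not build a prime-to-$p$ hypothesis into the proof.

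In (2), your reduction to ``the special-point Frobenius classes are Zariski-dense in $G^\ad\git G^\ad$'' is a genuinely stronger claim than anything you justify or than what \cite{patrikis-klevdal:shimura} provides, and it is the crux: with $p$ and $v$ fixed, these classes are quite constrained (Weil-type classes for $q$ a power of $p$), and ``varying the CM datum sweeps out a rich set'' is not an argument. The paper's route, following the proof of \cite[Theorem 3.10]{patrikis-klevdal:shimura}, needs far less: one constructs particular special points (via \cite[Proposition 3.7, Lemma 3.9]{patrikis-klevdal:shimura}) whose Frobenius images are generic enough --- e.g.\ topologically generating a Zariski-dense subgroup of a maximal torus --- that \cite[Proposition 2.8]{patrikis-klevdal:shimura} applies: an automorphism preserving the semisimple conjugacy class of such an element (and hence of its powers) can be adjusted by an inner automorphism to fix a maximal torus pointwise, so it is inner. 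Your observation that the fixed locus of a nontrivial outer class in $G^\ad\git G^\ad$ is a proper closed subvariety is fine, but as written the density input it relies on is unproven, so the proposal does not close the argument; replacing it with the single-generic-point mechanism above is what the paper actually does.
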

\begin{proof}
(1) This is essentially \cite[Proposition 2.3-(2)]{patrikis-klevdal:shimura}; we recall the proof. 
Let $\ol{\rho}_i$ be the restriction of $\rho_i$ to the geometric fundamental group $\pi_1(\ms{S}_{\ol{v}})$ (for $\ms{S}_{\ol{v}}$ the base change of $\ms{S}_v$ to $\ol{\kappa}(v)$), and let $H \subseteq G^\ad_{\ol{\QQ}_\lambda}$ be the Zariski-closure of the image of $\ol{\rho}_i$.
Since $\pi_1(\ms{S}_{\ol{v}})$ is a normal subgroup of $\pi_1(\ms{S}_v)$ with quotient $\Gal(\ol{\kappa(v)}/\kappa(v)) \cong \wh{\ZZ}$, and $\rho_i$ has Zariski-dense image, $H$ is also normal in $G^\ad_{\ol{\QQ}_\lambda}$ with commutative quotient.
But $G^\ad_{\ol{\QQ}_\lambda}$ is the product of its simple factors, so $H=G^\ad_{\ol{\QQ}_\lambda}$.

Consider now the homomorphism $\rho_i^\mr{top}$ defined as the composition
\[
\Gamma
\to
\pi_1(S_\CC) \xrightarrow{\sim}
\pi_1(S_{\ol{\QQ}})
\xrightarrow{\mr{sp}}
\pi_1^\mr{t}(\ms{S}_{\ol{v}})
\xrightarrow{\ol{\rho}_i}
G^\ad(\ol{\QQ}_\lambda),
\]
where $\mr{sp}$ is the specialization homomorphism (see e.g.\ Corollary 2.6 in the first arXiv version of \cite{patrikis-klevdal:shimura}).
Since $\mr{sp}$ is surjective, it follows from the previous paragraph that $\rho_i^\mr{top}$ has Zariski-dense image.
By a version of Margulis's superrigidity theorem (see \cite[Proposition 3.3]{patrikis-klevdal:shimura}, which applies because of \cref{assumption:superrigid}!), we have $\rho_1^\mr{top}=\tau \circ \rho_2^\mr{top}$ for some $\tau \in \Aut(G^\ad_{\ol{\QQ}_\lambda})$.
The image of $\Gamma$ is dense in $\pi_1(S_{\CC})$, so $\ol{\rho_1}=\tau \circ \ol{\rho_2}$. Thus $\rho_1$ and $\tau\circ \rho_2$ are both extensions of $\ol{\rho_1}=\tau \circ \ol{\rho_2}$ to $\pi_1(\ms{S}_v)$, and hence are equal by \cite[Lemma 2.1]{patrikis-klevdal:shimura}. 

(2).
This follows from the same argument as given for \cite[Theorem 3.10]{patrikis-klevdal:shimura}. More specifically, we construct special points using \cite[Proposition 3.7, Lemma 3.9]{patrikis-klevdal:shimura} to apply \cite[Proposition 2.8]{patrikis-klevdal:shimura}. 
\end{proof}

It is shown in the course of the proof of \cite[Proposition 2.3]{patrikis-klevdal:shimura} that the representation $\rho_{\lambda, v}^\ad$ has Zariski-dense image in $G^\ad_{\ol{\QQ}_\lambda}$ and is tame, and the same holds for $\rho_{\pi \leadsto \lambda, v}^\ad$ by \cref{thm:companions} since $\mc{E}_{v, \ol{\QQ}_\pi}^\dag$ has surjective monodromy representation and has unipotent monodromy by \cite[Theorem 6.4.5]{kedlaya:semistable-reduction}. 

The compatibility at special points is given in the following lemma. In fact, we prove the stronger compatibility at special points of the full $G$-local systems, rather than just their adjoint quotients. This stronger compatibility is used in \cite{patrikis:full-compatibility}.

\begin{lem}
Let $s= [x, a] \in \mr{Sh}(\CC)$ be a special point defined in $S_{K_0, s}$ over a number field $E(sK_0)$. Then the canonical local system $\rho_{K_0, s}$ specialized to $sK_0$ defines a compatible system of $G(\QQ_{\ell})$-representations in the following sense: for any choice of Frobenius $\Frob_v$ at a place $v \vert p$ ($p$ not dividing the $N$ from \cref{lem:integral-model}) of $E(sK_0)$, $\rho_{\ell, s}(\Frob_v) \in G(\QQ_{\ell})$ is for all $\ell \neq p$ $G(\QQ_{\ell})$-conjugate to an independent of $\ell$ value $q_{\Frob_v}^{-1} \in G(\QQ)$. When $\ell=p$, as we vary the finite-dimensional representation $\xi$ of $G$, the F-isocrystal $D_{\mr{cris}}(\xi \circ \rho_{p, s}|_{\Gamma_{E(sK_0)_v}})$ on $\Spec(\kappa(v))$ with its linearized crystalline Frobenius $\varphi^{[\kappa(v):\mathbb{F}_p]}$ defines up to conjugacy an element $\gamma_v \in G(\ol{\QQ}_p)$, and $\gamma_v= q_{\Frob_v}^{-1}$ in $[G \git G](\QQ)$.
\end{lem}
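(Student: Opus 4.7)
The plan is to reduce the statement to the classical compatibility of Galois representations attached to algebraic Hecke characters, using that at a special point the canonical local system factors through an algebraic torus. Recall that $s=[x,a]$ being special means $x$ factors through $T_\RR$ for some $\QQ$-rational torus $T\subseteq G$: there exists $h_x\colon\mbb{S}\to T_\RR$ giving $x$ upon composing with $T\hookrightarrow G$. Let $\mu_x$ be the associated cocharacter, defined over the reflex field $E(x)\subseteq E(sK_0)$, and let $r(T,x)\colon \Res_{E(x)/\QQ}\mbb{G}_m\to T$ be Deligne's reciprocity morphism. By the defining property of the canonical model (as in \cite[\S2]{deligne:canonicalmodels}), for every prime $\ell$ the specialization of $\rho_{K_0,s,\ell}$ at $sK_0$ factors through $\Gal^{\mr{ab}}_{E(sK_0)}$ and equals, up to the Hecke twist by $a$, the composition
\[
\Gal^{\mr{ab}}_{E(sK_0)} \xrightarrow{\mr{Art}^{-1}} \mbb{A}^\times_{E(sK_0),f}/\ol{E(sK_0)^\times} \xrightarrow{\Nm} \mbb{A}^\times_{E(x),f}/\ol{E(x)^\times} \xrightarrow{r(T,x)} T(\mbb{A}_f)/\ol{T(\QQ)} \to T(\QQ_\ell).
\]

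For $\ell\neq p$, fix $v\mid p$ and a finite idele $\pi_v$ which is a uniformizer at $v$ and $1$ elsewhere; the hypothesis $p\nmid N$ ensures that $T$ and $E(x)$ are unramified above $p$, so $r(T,x)(\Nm\pi_v)$ has a canonical algebraic representative $q_{\Frob_v}\in T(\QQ)\subseteq G(\QQ)$ modulo a finite unit group which is absorbed into the level. Projecting the composite above to $T(\QQ_\ell)$ then identifies $\rho_{\ell,s}(\Frob_v)$ as $G(\QQ_\ell)$-conjugate to $q_{\Frob_v}^{-1}$ (the inverse coming from the usual arithmetic/geometric Frobenius normalization), manifestly independent of $\ell$.

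For $\ell=p$, the same $T$-valued description of $\rho_{p,s}$ persists. Since $T$ and $E(x)$ are unramified at $v$ and $\mu_x$ is defined over the unramified completion of $E(sK_0)_v$, the local representation $\rho_{p,s}|_{\Gal_{E(sK_0)_v}}$ is an unramified-at-$v$ algebraic Hecke character of $T$, hence crystalline. For any $\xi\in\Rep(G)$, splitting $T$ over a finite unramified extension of $E(sK_0)_v$ decomposes $\xi\circ\rho_{p,s}|_{\Gal_{E(sK_0)_v}}$ into a sum of one-dimensional crystalline characters of CM type, for which Fontaine's explicit computation identifies the linearized Frobenius $\varphi^{[\kappa(v):\mbb{F}_p]}$ on $D_\crys$ with $\xi(q_{\Frob_v}^{-1})$. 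Hence the class $\gamma_v\in[G\git G](\ol{\QQ}_p)$ agrees with that of $q_{\Frob_v}^{-1}\in G(\QQ)$.

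The main obstacle is the crystalline Frobenius calculation at $\ell=p$ together with harmonizing its normalization with the $\ell$-adic side. After reducing to a single CM-type character via a faithful $\xi$, the result is a standard fact in the $p$-adic Hodge theory of algebraic Hecke characters, but one must be vigilant about the compounding conventions—Artin reciprocity, the reflex norm, arithmetic versus geometric Frobenius, and the sign in the crystalline comparison—so that both realizations genuinely land on the same element $q_{\Frob_v}^{-1}\in G(\QQ)$ inside $[G\git G](\QQ)$.
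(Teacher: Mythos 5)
Your overall strategy coincides with the paper's: at the special point, use the reciprocity law for canonical models to express every specialization $\rho_{\ell,s}$ (including $\ell=p$) through the torus and the reflex/cocharacter datum, read off the $\ell\neq p$ Frobenius classes as a common rational element $q_{\Frob_v}^{-1}$, and handle $\ell=p$ by splitting $\xi\circ\rho_{p,s}|_{\Gamma_{E(sK_0)_v}}$ into crystalline characters and invoking the explicit Lubin--Tate/CM computation of the crystalline Frobenius (the paper does this via the Tannakian definition of $\gamma_v$ and \cite[Proposition B.4]{conrad:lifting}, keeping track of the same normalization issues you flag).

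There is, however, one step whose justification is not correct as written. You claim that $p\nmid N$ ensures that $T$ and $E(x)$ are unramified above $p$, and you use this both to conclude that $\rho_{\ell,s}$ ($\ell\neq p$) is unramified at $v$ (so that $\rho_{\ell,s}(\Frob_v)$ is a well-defined conjugacy class equal to $q_{\Frob_v}^{-1}$) and to get crystallinity, rather than mere potential crystallinity, of $\rho_{p,s}|_{\Gamma_{E(sK_0)_v}}$. The integer $N$ of \cref{lem:integral-model} is determined by the Shimura variety and its integral model; it does not bound the ramification of the CM field $E(x)$ of an arbitrary special point, and even if $T$ and $E(x)$ were unramified above $p$, the correction term in the reciprocity formula (the element $q_\sigma\in G(\QQ)_+$ with $\rho_s(\sigma)=a^{-1}q_\sigma^{-1}r_x(\alpha)a$, i.e.\ the finite-order part of the associated Hecke character at $v$) could a priori be nontrivial on inertia. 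What actually forces $q_\sigma=1$ for $\sigma\in I_{E(sK_0)_v}$ --- and what the paper uses --- is that special points extend to integral points of $\ms{S}$ over $\mc{O}_{\ol{E}}[1/N]$ (property (2) of the integral model), so the specialized local systems at $\ell\neq p$ are unramified at $v$; this single fact yields both the $\ell\neq p$ statement and the fact that inertia at $p$ acts through the algebraic map $r_x$ on local units, whence crystallinity without any unramifiedness hypothesis on $E(x)$. Finally, note that the crystalline Frobenius computation you defer as ``standard'' is really the crux: the paper carries it out, showing the linearized Frobenius acts by $\bigoplus_i\psi_i(\rec_v(\varpi_v))\chi_i(\varpi_v)^{-1}$ and matching this with $\xi(q_{\Frob_v}^{-1})$, with the sign fixed by the geometric-Frobenius normalization of $\rec_v$.
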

\begin{proof}
This is a standard calculation, but we give some details particularly since we have no reference for the crystalline compatibility. Let $\rho_s$ be the specialization of $\rho_{K_0}$ along $s$ and likewise $\rho_{\ell, s}$ for its $G(\QQ_{\ell})$ projection for all $\ell$. By \cite[Lemma 3.2]{patrikis-klevdal:shimura}, for any $\alpha \in \mathbb{A}_{E(sK_0)}^\times$ mapping to $\sigma \in \Gamma^{\mr{ab}}_{E(sK_0)}$, we have
\[
q_{\sigma} a \rho_s(\sigma)= r_x(\alpha)a,
\]
i.e., $\rho_{s}(\sigma)= a^{-1}q_{\sigma}^{-1}r_x(\alpha)a$, for some $q_{\sigma} \in G(\QQ)_+$. For $\sigma \in \Gamma_{E(sK_0)_v}$ and $\ell \neq p$, by $\ell$-projection we obtain $\rho_{\ell, s}(\sigma)=a_{\ell}^{-1} q_{\sigma}^{-1} a_{\ell}$; since the special point is $\ol{\ZZ}[1/N]$-rational, for $\sigma \in I_{E(sK_0)_v}$ we get $q_{\sigma}=1$, and for $\sigma= \Frob_v$ we find $\rho_{\ell, s}(\Frob_v)$ is for all $\ell \neq p$ conjugate (in $G(\QQ_{\ell})$) to the independent of $\ell$ value $q_{\Frob_v}^{-1}$.

Projecting to the $p$-component, for $\sigma \in I_{E(sK_0)_v}$ we thus find $\rho_{p, s}(\sigma)= a_p^{-1} r_x(\alpha) a_p$, where $\alpha \in \mc{O}_v^\times$, and $\rho_{p, s}(\Frob_v)= a_p^{-1} q_{\Frob_v}^{-1} r_x(\varpi_v) a_p$ for the uniformizer $\varpi_v$ mapped to the choice of $\Frob_v$. In particular, up to conjugacy the abelian representation $\rho_{p, s}|_{I_{E(sK_0)_v}}$ is given by the algebraic representation $r_x$, hence is crystalline. In particular, the functor $\Rep(G_{\QQ_p}) \to \FIsoc(\kappa(v))$, 
\[
\xi \mapsto D_{\mr{cris}}(\xi \circ \rho_{p, s}),
\]
composed with the canonical (forgetful) fiber functor to $\mr{Vect}_{E(sK_0)_v}$, gives by the Tannakian formalism an element $\gamma_v \in G(\ol{\QQ}_p)$, well-defined up to inner automorphism (resulting from choosing an isomorphism over some finite extension of $E(sK_0)_v$ between this fiber functor and the canonical fiber functor on $\Rep(G_{\QQ_p})$), such that for all $\xi \in \Rep(G_{\QQ_p})$, $\xi(\gamma_v)$ has the same characteristic polynomial (and even same conjugacy class) as the linearized crystalline Frobenius $\varphi^{[\kappa(v):\mathbb{F}_p]}$ acting on $D_{\mr{cris}}(\xi \circ \rho_{p, s})$. We check this is the characteristic polynomial of $\xi(\inn(a_p)(\rho_{p, s}(\Frob_v)) \cdot r_x(\varpi_v)^{-1})$, i.e. is that of $\xi(q_{\Frob_v}^{-1})$.

There is a finite extension $K/\QQ_p$ such that for any representation $\xi$ of $G_{\QQ_p}$, 
\[
\xi_K \circ \inn(a_p)(\rho_{p, s}|_{\Gamma_{E(sK_0)_v}}) = \bigoplus_i \psi_i
\] 
for crystalline characters $\psi_i \colon \Gamma_{E(sK_0)_v} \to K^\times$. Restricting to $I_{E(sK_0)_v}$, this decomposition induces a decomposition $\xi_K \circ  r_x= \bigoplus_i \chi_i$ for algebraic homomorphisms $\chi_i \colon \Res_{E(sK_0)_v/\QQ_p} \mathbb{G}_m \to \Res_{K/\QQ_p}\mathbb{G}_m$ of $\QQ_p$-tori. A calculation that ultimately rests on Lubin--Tate theory (\cite[Proposition B.4]{conrad:lifting}, to which strictly speaking we should add the corresponding calculation of crystalline periods for unramified representations) shows that $\varphi^{[\kappa(v):\mathbb{F}_p]}$ acts on $D_{\mr{cris}}(\xi_K \circ \inn(a_p)(\rho_{p, s}|_{\Gamma_{E(sK_0)_v}}))$ (canonically a direct sum of free rank-1 $E(sK_0)_v \otimes_{\QQ_p} K$-modules, with $\varphi^{[\kappa(v):\QQ_p]}$ acting $E(sK_0)_v$-linearly) by 
\[
\bigoplus_i \psi_i(\rec_v(\varpi_v)) \chi_i(\varpi_v)^{-1}.
\] 
By the calculation of $\rho_{p, s}$, this implies $\varphi^{[\kappa(v):\mathbb{F}_p]}$ has the same characteristic polynomial as 
\[
\xi_K\left( \inn(a_p)(\rho_{p, s}(\varpi_v)) \cdot r_x(\varpi_v)^{-1} \right)= \xi_K(q_{\Frob_v}^{-1}).\footnote{Our formula is the inverse of that in \cite[Proposition B.4]{conrad:lifting} because we normalize the reciprocity map $\rec_v$ of local class field theory to map uniformizers to geometric Frobenii.}
\]
and thus the same characteristic polynomial as $\xi(q_{\Frob_v}^{-1})$.

Since finite-dimensional representations separate semi-simple conjugacy classes, $\rho_{\ell, s}(\Frob_v)$, $q_{\Frob_v^{-1}}$, and $\gamma_v$ are equal as elements of $[G \git G](\QQ)$.
\end{proof}

\bibliographystyle{amsalpha}
\bibliography{biblio.bib}
\end{document}